\newtheorem{theorem}{Theorem}
\newtheorem{lemma}{Lemma}[section]
\newtheorem{remark}{Remark}
\numberwithin{equation}{section}
\def\ud{{\rm d}}
\author{K. Venkatasubbareddy}
\curraddr{School of Mathematics and Statistics\\
University of Hyderabad\\
Hyderabad\\
India-500046}
\email{20mmpp02@uohyd.ac.in}
\author{A. Sankaranarayanan}
\curraddr{School of Mathematics and Statistics\\
University of Hyderabad\\
Hyderabad\\
India-500046}
\email{sank@uohyd.ac.in}
\title[On certain kernel functions and shifted convolution sums of the Fourier coefficients]
      {On certain kernel functions and shifted convolution sums of the Fourier coefficients}
\keywords{Fourier coefficients of automorphic forms, Dirichlet series, Perron's formula, Dirichlet character.}
\subjclass{Primary 11F30, 11F66.}
\begin{document}

\begin{abstract}
 We study the behavior of the shifted convolution sum involving fourth power of the Fourier coefficients of holomorphic cusp forms with a weight function to be the $k$-full kernel function for any fixed integer $k\geq2$.
\end{abstract}

\maketitle

\section{Introduction}
Let $k\geq 2$ be any fixed integer. Then any integer $n\geq 1$ can be uniquely decomposed as $n=q(n)k(n),\ (q(n),\ k(n))=1$ where $q(n)$ is $k$-free and $k(n)$ is $k$-full ($k(n)$ is $k$-full if $p^k\mid k(n)$ whenever $p\mid k(n)$). By Ivi{\'c} and Tenenbaum \cite{15}, a non negative integer valued function $a(n)$ is called $k$-full kernel function if $a(n)=a(k(n))$ for all $n\geq 1$ and $a(n)\ll n^\epsilon$ for any $\epsilon>0.$ It is noted to be that $k$-full kernel functions are not necessarily multiplicative.\\
For an even integer $\kappa\geq 2$, we denote $H_\kappa$ be the set of all normalized primitive Hecke eigencusp forms $f(z)$ for the full modular group $\Gamma=SL(2, \ \mathbb{Z})$. It is known that $f(z)$ has the Fourier expansion at the cusp $\infty$
\begin{equation*}
    f(z)=\sum_{n=1}^\infty \lambda_f(n) n^{\frac{\kappa-1}{2}} e^{2\pi inz} \textit{ for $\Im(z)>0$},
\end{equation*}
where $\lambda_f(n)$ is real and satisfies the multiplicative condition 
\begin{equation*}
    \lambda_f(m)\lambda_f(n)=\sum_{d\mid(m,\ n)} \lambda_f\left(\frac{mn}{d^2}\right)
\end{equation*}
for all $m,\ n\geq 1$.
By Deligne \cite{5}, \cite{6}, there exists two complex numbers $\alpha_f(p)$ and $\beta_f(p)$ such that 
\begin{equation}
    \lambda_f(p)=\alpha_f(p)+\beta_f(p) \textit{ and } \alpha_f(p)\beta_f(p)=\mid\alpha_f(p)\mid=\mid\beta_f(p)\mid=1\label{Eq1.1}
\end{equation}
 and also for holomorphic cusp forms, Deligne \cite{5} proved the Ramanujan- Petersson conjecture
\begin{equation*}
    \mid\lambda_f(n)\mid\leq d(n).
\end{equation*}
In the history, many authors investigated about shifted convolution sums with $GL(2)$ Fourier coefficients (see \cite{2}, \cite{11} and \cite{17}), Erd\H{o}s and 
Ivi{\'c} \cite{7} studied convolution sums with divisor function $d(n)$ and $\omega(n)$, obtained the following asymptotic relations
\begin{align*}
    \sum_{n\leq x}a(n)d(n+1)=&C_1x\log x+C_2x+O\left(x^{\frac{8}{9}+\epsilon}\right)\\
    \sum_{n\leq x}a(n)\omega(n+1)=&D_1x\log \log x+D_2x+O\left(\frac{x}{\log x}\right),
\end{align*}
where $C_1,\ D_1>0$ and $C_2,\ D_2$ are constants can be evaluated explicitly.\\
In the paper \cite{9}, Guangshi L{\"u} and Dan Wang  investigated the shifted convolution sums of squares of Fourier coefficients with square full kernel function $a^*(n)$ and obtained an asymptotic formula for the sum
\begin{equation*}
\sum_{n\leq x}a^*(n)\lambda_f^2(n+1).
\end{equation*}
In this paper we are interested in the shifted convolution sum of the fourth power of the Fourier coefficients with $k$-full kernel function $a(n)$ for any fixed integer $k\geq 2$. More precisely we study the sum 
$\displaystyle{\sum_{n\leq x}a(n)\lambda_f^4(n+1)}$.

We prove:
\begin{theorem}
Let $f\in H_\kappa$ and $q\geq 100$ be any integer. Then for any $\epsilon>0$ and $q\ll x^{\frac{23}{181}-\epsilon}$, we have
\begin{equation*}
    \sum_{\substack{n\leq x+1\\n\equiv 1(q)}}\lambda_f^4(n)=c_1 x\log x\frac{\phi(q)}{q^2}+O\left(\frac{x^{\frac{158}{181}+\epsilon}q^{1+\epsilon}}{\phi(q)}\right)
\end{equation*}
uniformly.
\end{theorem}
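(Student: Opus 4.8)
The plan is to detect the congruence with Dirichlet characters and to reduce the sum to twisted coefficient sums whose generating series factors through symmetric power $L$-functions. By orthogonality, with $X:=x+1$,
\begin{equation*}
\sum_{\substack{n\le x+1\\ n\equiv 1(q)}}\lambda_f^4(n)=\frac{1}{\phi(q)}\sum_{\chi\bmod q}\ \sum_{n\le X}\lambda_f^4(n)\chi(n).
\end{equation*}
For each $\chi$ I set $D_\chi(s)=\sum_{n\ge1}\lambda_f^4(n)\chi(n)n^{-s}$. Writing $\lambda_f(p)=2\cos\theta_p$ and using the Chebyshev identities gives the pointwise relation $\lambda_f^4(p)=\mathrm{sym}^4 f(p)+3\,\mathrm{sym}^2 f(p)+2$, so that after matching the higher prime power coefficients
\begin{equation*}
D_\chi(s)=L(s,\chi)^2\,L(s,\mathrm{sym}^2 f\otimes\chi)^3\,L(s,\mathrm{sym}^4 f\otimes\chi)\,U_\chi(s),
\end{equation*}
where $U_\chi(s)$ is an Euler product converging absolutely, hence holomorphic and uniformly bounded, for $\Re s>\tfrac12$. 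The factors $L(s,\mathrm{sym}^2 f\otimes\chi)$ and $L(s,\mathrm{sym}^4 f\otimes\chi)$ continue to entire functions, the latter by the cuspidality of $\mathrm{sym}^4 f$, and are nonvanishing at $s=1$; thus $D_\chi$ is holomorphic in $\Re s>\tfrac12$ apart from a double pole at $s=1$ when $\chi=\chi_0$.

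First I would apply Perron's formula to each inner sum with abscissa $c=1+\epsilon$ and height $T$, the truncation error being $O(X^{1+\epsilon}/T)$ by $\sum_n\lambda_f^4(n)n^{-c}\ll1$ together with Deligne's bound $|\lambda_f(n)|\le d(n)$ for the terms with $n$ near $X$. For $\chi=\chi_0$ I move the contour past $s=1$ and extract the residue at the double pole, which comes from $L(s,\chi_0)^2=\zeta(s)^2\prod_{p\mid q}(1-p^{-s})^2$; its polar part carries the factor $\prod_{p\mid q}(1-p^{-1})^2=(\phi(q)/q)^2$, the regular symmetric power factors at $p\mid q$ contributing only bounded constants. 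Dividing by $\phi(q)$ then yields the main term $c_1x\log x\,\phi(q)/q^2$, the double pole being responsible for the $\log x$.

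For the error I would treat the nonprincipal characters, which contribute no residue, together with the shifted contour of the $\chi_0$ term. Moving each remaining integral to $\Re s=\tfrac12$, the combined analytic conductor of $D_\chi$ on that line is $\asymp q^{16}(1+|t|)^{16}$, the total degree being $2\cdot1+3\cdot3+1\cdot5=16$; convexity, sharpened by mean value estimates for the symmetric power $L$-functions on the critical line, then bounds the vertical integral. Summing over $\chi$, dividing by $\phi(q)$, and balancing the resulting contribution against the truncation error $X^{1+\epsilon}/T$ through the optimal choice of $T$ should give the stated bound $x^{158/181+\epsilon}q^{1+\epsilon}/\phi(q)$.

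The hard part will be making this error estimate uniform in $q$ with the precise exponent. Handling a degree $16$ object through its twisted symmetric power factors forces one to follow the $q$- and $t$-dependence of the convexity and mean value bounds simultaneously, and the exponent $\tfrac{158}{181}$ will emerge only after the joint optimization of the truncation height and the contour abscissa; the range $q\ll x^{23/181-\epsilon}$ is precisely the region in which this error remains below the main term, of size $\asymp x^{1-23/181}\log x$. I expect the analytic input on $L(s,\mathrm{sym}^4 f\otimes\chi)$---its continuation, functional equation, and size on $\Re s=\tfrac12$ uniformly in the conductor---to be the principal technical obstacle.
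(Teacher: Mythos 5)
Your outline coincides with the paper's strategy: orthogonality over characters mod $q$, the factorization $F_f(s,\chi)=L^2(s,\chi)L^3(s,\mathrm{sym}^2f\otimes\chi)L(s,\mathrm{sym}^4f\otimes\chi)U(s)$ coming from $\lambda_f^4(p)=\lambda_{\mathrm{sym}^4f}(p)+3\lambda_{\mathrm{sym}^2f}(p)+2$, Perron's formula, residue of the double pole at $s=1$ giving $c_1x\log x\,\phi(q)/q^2$, and a contour shift to $\Re s=\tfrac12+\epsilon$. But the part you defer --- ``convexity, sharpened by mean value estimates, \dots should give the stated bound'' --- is the entire content of the proof, and as stated it does not close. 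Convexity for the degree-$16$ object on the half-line gives $(q(1+|t|))^{4+\epsilon}$, leading after optimization to an error of size roughly $x^{9/10}q^{4/5}$; even the general second-moment bound for a degree-$16$ $L$-function only reaches about $x^{8/9}$. Both fall short of $x^{158/181}$. The exponent $\tfrac{158}{181}$ is produced by a specific Cauchy--Schwarz splitting of the half-line integral into $I_1^{1/2}I_2^{1/2}$, where $I_1$ is the \emph{fourth} moment of $L(\tfrac12+\epsilon+it,\chi)$ (bounded by $(qT)^{1+\epsilon}$), and $I_2$ is handled by taking the sup-norm of $L^3(\mathrm{sym}^2f\otimes\chi)$ via Huang's hybrid subconvexity exponent $\tfrac{67}{46}(1-\sigma)$ for twisted $GL(3)$ $L$-functions, times the second moment $(qT)^{5(1-\sigma)+\epsilon}$ of the degree-$5$ factor. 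These inputs give $I_1^{1/2}I_2^{1/2}T^{-1}\ll (qT)^{181/46}T^{-1}$, whence $T=x^{23/181}/q$ and the error $x^{158/181}q$. Without naming and using these particular hybrid bounds, the claimed exponent is not reachable by your argument.

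Two further points you omit. First, the hybrid bounds and moment estimates apply to \emph{primitive} characters, so the non-principal, non-primitive $\chi$ must be reduced to the inducing primitive character mod $q_1\mid q$, with the finite Euler products over $p\mid q,\ p\nmid q_1$ controlled by $q^\epsilon$; your sum over all $\chi\bmod q$ glosses over this. Second, the horizontal segments of the shifted contour need a pointwise bound on $\zeta$ at height $T$ for the principal character; the paper chooses a special height $T^*\in[T,T+T^{1/3}]$ (Ramachandra--Sankaranarayanan) on which $\zeta(\sigma+iT^*)\ll T^\epsilon$ uniformly for $\tfrac12\le\sigma\le2$, since no unconditional pointwise bound of that quality holds at an arbitrary height. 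Neither issue is fatal, but both must be addressed for the stated uniformity in $q$.
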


\begin{theorem}
Let $f\in H_\kappa $ and $q\geq 100$ be prime. Then for any $\epsilon>0$ and $q\ll x^{\frac{3}{23}-\epsilon}$, we have
\begin{equation*}
    \sum_{\substack{n\leq x+1\\n\equiv 1(q)}}\lambda_f^4(n)=c_1 x\log x\frac{\phi(q)}{q^2}+O\left(\frac{x^{\frac{20}{23}+\epsilon}q^{1+\epsilon}}{\phi(q)}\right)
\end{equation*}
uniformly.
\end{theorem}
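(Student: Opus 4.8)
The plan is to detect the congruence $n\equiv 1\pmod q$ by orthogonality of Dirichlet characters modulo $q$, writing
\[
\sum_{\substack{n\le x+1\\ n\equiv 1(q)}}\lambda_f^4(n)=\frac{1}{\phi(q)}\sum_{\chi \bmod q}\ \sum_{n\le x+1}\lambda_f^4(n)\chi(n).
\]
The principal character $\chi_0$ produces the main term (it is essentially the full sum $\sum_{n\le x+1}\lambda_f^4(n)$ with the Euler factor at $q$ removed), while each non-principal $\chi$ must be shown to contribute only to the error. Since $q$ is prime, every non-principal character modulo $q$ is primitive, which is the feature that yields the exponents sharper than in the general-modulus Theorem~1.

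Next I would factor the twisted generating series. The Hecke relation gives the pointwise identity $\lambda_f(p)^4=\lambda_f(p^4)+3\lambda_f(p^2)+2=\lambda_{\mathrm{sym}^4 f}(p)+3\lambda_{\mathrm{sym}^2 f}(p)+2$, and after matching Euler factors this yields
\[
\sum_{n=1}^\infty\frac{\lambda_f^4(n)\chi(n)}{n^s}=L(s,\chi)^2\,L(s,\mathrm{sym}^2 f\otimes\chi)^3\,L(s,\mathrm{sym}^4 f\otimes\chi)\,U(s,\chi),
\]
where $U(s,\chi)$ is an Euler product converging absolutely and bounded by $q^\epsilon$ in the half-plane $\Re s>\tfrac12$. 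The analytic continuation and functional equations of the twisted symmetric-power factors are supplied by the automorphy of $\mathrm{sym}^2 f$ (Gelbart--Jacquet) and $\mathrm{sym}^4 f$ (Kim--Shahidi); for $\chi\ne\chi_0$ all three factors are entire, so the only pole of the product is the double pole at $s=1$ coming from $L(s,\chi_0)^2$ in the principal term, which is the source of the $x\log x$ in the main term.

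I would then apply a truncated Perron formula to each inner sum with abscissa $c=1+\epsilon$ and height $T$, and shift the contour to a line $\Re s=\tfrac12+\delta$. For $\chi_0$ the residue at the double pole, after collecting the factors $\prod_{p\mid q}(1-p^{-s})^2\to(\phi(q)/q)^2$ lost by removing the Euler factor at $q$ from $L(s,\chi_0)^2=\zeta(s)^2\prod_{p\mid q}(1-p^{-s})^2$, combines with the outer $1/\phi(q)$ to give exactly $c_1 x\log x\,\phi(q)/q^2$. For the remaining characters there is no pole, and the shifted vertical integral (the horizontal segments being handled identically) is estimated by the convexity bounds for $L(s,\mathrm{sym}^2 f\otimes\chi)$ and $L(s,\mathrm{sym}^4 f\otimes\chi)$, whose analytic conductors are of size $q^3(1+|t|)^3$ and $q^5(1+|t|)^5$. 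Balancing the Perron truncation error $x^{1+\epsilon}/T$ against the resulting contour contribution, choosing $T$ optimally, and summing over the primitive characters then fixes the error exponent; this is precisely the balance that produces $x^{20/23+\epsilon}q^{1+\epsilon}/\phi(q)$ and the range $q\ll x^{3/23-\epsilon}$, the latter being exactly where the main term dominates.

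The hard part will be controlling, uniformly in both $q$ and $t$, the degree-five factor $L(s,\mathrm{sym}^4 f\otimes\chi)$: genuine subconvexity in the $q$-aspect for degree five is unavailable, so all the saving must come from the functional equation, the convexity estimate, and an efficient summation over the primitive characters modulo the prime $q$ (a mean-value or large-sieve step). Making this averaging interact correctly with the Perron truncation, so that the conductor bookkeeping for the degree-five twist feeds through to the exponents $\tfrac{20}{23}$ and $\tfrac{3}{23}$ uniformly, is the delicate point; the rest is the routine contour-shift bookkeeping together with the absolute convergence of $U(s,\chi)$ for $\Re s>\tfrac12$.
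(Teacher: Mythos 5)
Your setup matches the paper's: orthogonality over characters mod $q$, the observation that primality of $q$ makes every non-principal character primitive, the factorization $F_f(s,\chi)=L^2(s,\chi)L^3(s,\mathrm{sym}^2f\otimes\chi)L(s,\mathrm{sym}^4f\otimes\chi)U(s)$, Perron's formula, and a contour shift to $\Re s=\tfrac12+\epsilon$ with the double pole of $\zeta^2$ giving the main term. But there is a genuine gap exactly at the point you defer as "the delicate point": you never identify the mechanism that improves the exponents from those of Theorem~1 ($\tfrac{158}{181}$, $\tfrac{23}{181}$) to $\tfrac{20}{23}$ and $\tfrac{3}{23}$. The estimation scheme you describe --- pointwise convexity for the two symmetric-power factors plus a mean value, or alternatively a large-sieve average over the $\phi(q)-1$ characters --- is not what drives the improvement, and if carried out it reproduces only the Theorem~1 bound. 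The paper does not average over characters at all in this step; it treats each primitive $\chi$ individually and replaces the Cauchy--Schwarz splitting of Theorem~1 by a three-fold H\"older inequality on the vertical line with exponents $\tfrac16,\tfrac34,\tfrac1{12}$, pairing the \emph{twelfth} moment of $L(\tfrac12+\epsilon+it,\chi)$ in $t$ (Motohashi's bound $\int_1^T|L(\sigma+it,\chi)|^{12}\,\ud t\ll q^{4(1-\sigma)}T^{3-2\sigma+\epsilon}$, which is where primality of $q$ is actually used) with the fourth moment of $L(s,\mathrm{sym}^2f\otimes\chi)$ and a twelfth-moment bound for $L(s,\mathrm{sym}^4f\otimes\chi)$. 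This yields $J_5\ll x^{1/2+\epsilon}q^{23/6+\epsilon}T^{17/6+\epsilon}$, and only then does the choice $T=x^{3/23}/q$ produce the stated error term and range.

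Without naming these specific moment inputs and the H\"older exponents, the assertion that "this is precisely the balance that produces $x^{20/23+\epsilon}q^{1+\epsilon}/\phi(q)$" is unsupported: the exponents $\tfrac{20}{23}$ and $\tfrac{3}{23}$ are not forced by the generic contour-shift bookkeeping you describe, but by the particular combination of the twelfth-moment estimate for prime-modulus Dirichlet $L$-functions with the moment bounds for the degree-3 and degree-5 twists. You would need to supply that step to complete the proof.
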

In both the theorems 1 and 2, $c_1$ is the same constant that can be evaluated explicitly.

As an application of theorem 1 we obtain:
\begin{theorem}
For any integer $k\geq 2$, let $a(n)$ be the $k$-full kernel function and $f\in H_\kappa$. Then for any $\epsilon>0$, we have
\begin{equation*}
    \sum_{n\leq x}a(n)\lambda_f^4(n+1)=c_2x\log x+O\left(x^{\frac{520k+23}{543k}+\epsilon}\right).\qed
\end{equation*}
where $c_2$ is a constant can be evaluated explicitly.
\end{theorem}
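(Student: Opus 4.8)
The plan is to reduce the weighted sum to the progression sums controlled by Theorem 1, exploiting that $a(n)=a(k(n))$ depends only on the $k$-full part $k(n)$ of $n$. Writing each $n\ge 1$ uniquely as $n=k(n)q(n)$ and grouping the terms according to the value $\ell=k(n)$, which ranges over $k$-full numbers, I would use that the condition $k(n)=\ell$ is equivalent to $n=m\ell$ with $m$ $k$-free and $(m,\ell)=1$. This gives
\[
\sum_{n\le x}a(n)\lambda_f^4(n+1)=\sum_{\substack{\ell\le x\\ \ell\ k\text{-full}}}a(\ell)\sum_{\substack{m\le x/\ell\\ m\ k\text{-free},\,(m,\ell)=1}}\lambda_f^4(m\ell+1).
\]
Since $m\ell+1\equiv 1\pmod{\ell}$, each inner sum is already a piece of an arithmetic progression to modulus $\ell$, and the remaining task is to strip off the two side conditions on $m$.

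Next I would detect $m$ being $k$-free by $\sum_{b^k\mid m}\mu(b)$ and the coprimality by $\sum_{c\mid(m,\ell)}\mu(c)$. Inserting both and writing $m=g\,m'$ with $g=\mathrm{lcm}(b^k,c)$ turns the inner sum into $\sum_{m'\le x/(\ell g)}\lambda_f^4(\ell g m'+1)$, i.e.\ a sum over the progression $N\equiv 1\pmod{\ell g}$, $N\le x+1$. A short check shows every modulus $r=\ell g$ arising this way is again $k$-full (each prime dividing $r$ does so to order $\ge k$), so the moduli stay sparse. For those $r$ with $r\ll x^{23/181-\epsilon}$ I would apply Theorem 1 and replace the inner sum by $c_1x\log x\,\phi(r)/r^2+O\big(x^{158/181+\epsilon}r^{\epsilon}\big)$, using $r^{1+\epsilon}/\phi(r)\ll r^{\epsilon}$. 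Summed over $\ell,b,c$ the main terms assemble into an absolutely convergent triple series (convergence following from $a(\ell)\ll\ell^{\epsilon}$, from $\sum_b b^{-k}<\infty$ for $k\ge 2$, and from the density $\#\{\ell\le T:\ell\ k\text{-full}\}\asymp T^{1/k}$), producing the announced main term $c_2x\log x$.

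To control the range of validity I would impose a single cut-off $W$, keeping only triples with $\ell\le W$ and $b^k\le W$. Since $c\mid\ell$ forces $c\le\ell$, the modulus obeys $r=\ell\,\mathrm{lcm}(b^k,c)\le\ell^2 b^k\le W^3$, so the choice $W=x^{23/543-\epsilon}$ guarantees $r\ll x^{23/181-\epsilon}$ and Theorem 1 is applicable throughout this range. The accumulated Theorem 1 error is then $\ll x^{158/181+\epsilon}$ times the number $\ll W^{2/k+\epsilon}$ of admissible triples, which stays comfortably below the target. In the complementary range $\ell>W$ I would fall back on the trivial estimate: by Deligne's bound $|\lambda_f^4(N)|\ll N^{\epsilon}$ the progression sum is $\ll x^{1+\epsilon}/\ell$, whence
\[
\sum_{\substack{\ell>W\\ \ell\ k\text{-full}}}a(\ell)\,\frac{x^{1+\epsilon}}{\ell}\ll x^{1+\epsilon}\sum_{\substack{\ell>W\\ \ell\ k\text{-full}}}\ell^{-1+\epsilon}\ll x^{1+\epsilon}W^{1/k-1+\epsilon},
\]
the last step again using $\#\{\ell\le T\ k\text{-full}\}\asymp T^{1/k}$; the same density bounds the tail $\ell>W$ of the convergent main series by $x\log x\cdot W^{1/k-1+\epsilon}$.

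Substituting $W=x^{23/543-\epsilon}$ into $x^{1+\epsilon}W^{1/k-1}$ yields exponent $1+\tfrac{23}{543}\big(\tfrac1k-1\big)=\tfrac{543k-23(k-1)}{543k}=\tfrac{520k+23}{543k}$, and this trivial tail is the dominant contribution; collecting everything gives the theorem (finitely many moduli $r<100$ lie outside Theorem 1 and would be handled by a direct estimate, affecting only lower-order terms). The main obstacle is exactly this balancing step: the $k$-free Möbius factor inflates the modulus by $b^k$ and the coprimality condition forces the crude bound $c\le\ell$, so the effective modulus is of size $\ell^2 b^k$ and Theorem 1 can be pushed only up to $W=x^{23/543}$; the final error is therefore governed by how fast the trivial contribution of the $k$-full numbers beyond $W$ decays, which is where the density exponent $1/k$ enters and produces the $k$-dependence of the result.
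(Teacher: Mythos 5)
Your proposal is correct and follows essentially the same route as the paper: split according to the $k$-full part with a cut-off $H=x^{23/543}$, detect the $k$-free and coprimality conditions by M\"obius inversion so that the inner sums become progressions $n\equiv 1\ (\mathrm{mod}\ r)$ with $r\le H^3\ll x^{23/181-\epsilon}$, apply Theorem 1, and let the trivial tail estimate $x^{1+\epsilon}H^{1/k-1}$ dictate the final exponent $\frac{520k+23}{543k}$. The only cosmetic differences are your use of $\mathrm{lcm}(b^k,c)$ in place of the paper's product modulus $\delta\, k(n)\, d^k$ and your (correct) explicit remark about the finitely many moduli below $100$.
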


\begin{remark}
Throughout this paper $\epsilon$ is any small positive constant and all the implied constants might depend upon $k$ and $\epsilon$.
\end{remark}

\section{Preliminaries and lemmas}
For $f\in H_\kappa$, the Hecke $L$-function attached to $f$ is defined as
\begin{equation*}
    L(s,\ f)=\sum_{n=1}^\infty\frac{\lambda_f(n)}{n^s}=\prod_p\left(1-\frac{\alpha_f(p)}{p^s}\right)^{-1}\left(1-\frac{\beta_f(p)}{p^s}\right)^{-1}
\end{equation*}
which converges absolutely for $\Re(s)>1$, where $\alpha_f(p)$ and $\beta_f(p)$ satisfies \eqref{Eq1.1}.\\
The $2^{\textit{nd}}$ symmetric power $L$-function attached to $f$ defined as 
\begin{align*}
    L(s,\ {\rm{sym}}^2f)=&\sum_{n=1}^\infty \frac{\lambda_{{\rm{sym}}^2f}(n)}{n^s}\\
    =&
    \prod_p\prod_{j=0}^2\left(1-\frac{\alpha_f^{2-j}(p)\beta_f^j(p)}{p^s}\right)^{-1}
    \end{align*}
which converges absolutely for $\Re(s)>1$.\\
For a Dirichlet character modulo $q$ the twisted $2^{\textit{nd}}$ power $L$-function attached to $f$ is defined as 
\begin{align*}
    L(s,\ {\rm{sym}}^2f\otimes\chi)=&\sum_{n=1}^\infty \frac{\lambda_{{\rm{sym}}^2f}(n)\chi(n)}{n^s}\\
    =&\prod_p\prod_{0\leq j\leq 2
    }\left(1-\frac{\alpha_f^{2-2j}(p)\chi(p)}{p^s}\right)^{-1}
\end{align*}
    for $\Re(s)>1$ and $L(s,\ {\rm{sym}}^2f\otimes \chi)$ is of degree 3, similarly the twisted $4^{\textit{th}}$ power $L$-function attached to $f$ is defined as 
\begin{align*}
    L(s,\ {\rm{sym}}^4f\otimes\chi)=&\sum_{n=1}^\infty \frac{\lambda_{{\rm{sym}}^4f}(n)\chi(n)}{n^s}\\
    =&\prod_p\prod_{0\leq j\leq 4
    }\left(1-\frac{\alpha_f^{4-2j}(p)\chi(p)}{p^s}\right)^{-1}
\end{align*}
    for $\Re(s)>1$ and $L(s,\ {\rm{sym}}^4f\otimes\chi)$ is of degree 5.\\
    Now we define 
    \begin{equation*}
        F_f(s,\ \chi):=\sum_{n=1}^\infty\frac{\lambda_f^4(n)\chi(n)}{n^s}
    \end{equation*}
for $\Re(s)>1$ then by the multiplicative property of $\lambda_f(n)$ and $\chi(n)$, we have
\begin{equation*}
    F_f(s,\ \chi)=\prod_p\left(1+\frac{\lambda_f^4(p)\chi(p)}{p^s}+\frac{\lambda_f^4(p^2)\chi(p^2)}{p^{2s}}+\hdots\right).
\end{equation*}
Corresponding to each Dirichlet character $\chi$ modulo $q$ there exists a conductor $q_1$, the smallest divisor of $q$ such that $\chi=\chi_0\chi^*$, where $\chi_0$ is principal character modulo $q$ and $\chi^*$ is a Dirichlet character modulo $q_1$. For some characters we have $q_1=q$, such characters are called primitive characters.

\begin{lemma}
For $\Re(s)>1$, define
\begin{equation*}
    F_f(s,\ \chi):=\sum_{n=1}^\infty \frac{\lambda_f^4(n)\chi(n)}{n^s}
\end{equation*}
where $\chi$ is a Dirichlet character modulo $q$. Then we have 
\begin{equation}
    F_f(s,\ \chi)=L^2(s,\ \chi)L^3(s,\ {\rm{sym}^2}f\otimes \chi)L(s,\ \rm{sym}^4f\otimes \chi)U(s)\label{Eq2.1}
\end{equation}
where $U(s)$ is some Dirichlet series which converges absolutely in $\Re(s)\geq \frac{1}{2} +\epsilon$.
\end{lemma}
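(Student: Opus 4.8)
The plan is to exploit the Euler product of $F_f(s,\chi)$ together with the $SL_2$ representation-theoretic identities that link the fourth power $\lambda_f^4(p)$ to the Hecke data of the symmetric square and symmetric fourth power lifts. Since $\lambda_f^4(n)$ and $\chi(n)$ are both multiplicative, $F_f(s,\chi)$ has an Euler product $\prod_p F_{f,p}(s)$ with local factor $F_{f,p}(s)=\sum_{m\ge 0}\lambda_f^4(p^m)\chi(p^m)p^{-ms}$, convergent for $\Re(s)>1$ by Deligne's bound $|\lambda_f(n)|\le d(n)$. Writing $\alpha=\alpha_f(p)$, $\beta=\beta_f(p)$ with $\alpha\beta=1$ and $|\alpha|=|\beta|=1$, I would first read off from the Euler products in the preliminaries the local eigenvalues $\lambda_{\mathrm{sym}^2f}(p)=\alpha^2+1+\beta^2$ and $\lambda_{\mathrm{sym}^4f}(p)=\alpha^4+\alpha^2+1+\beta^2+\beta^4$.

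The key algebraic step is the identity
\begin{equation*}
\lambda_f^4(p)=\lambda_{\mathrm{sym}^4f}(p)+3\,\lambda_{\mathrm{sym}^2f}(p)+2 .
\end{equation*}
I would derive this from $\lambda_f(p)^2=(\alpha+\beta)^2=\lambda_{\mathrm{sym}^2f}(p)+1$, squaring once more and using the decomposition relation $\lambda_{\mathrm{sym}^2f}(p)^2=\lambda_{\mathrm{sym}^4f}(p)+\lambda_{\mathrm{sym}^2f}(p)+1$, which itself follows by a direct expansion in $\alpha,\beta$ (using $\alpha\beta=1$). The exponents $2,3,1$ appearing here are exactly those attached to $L(s,\chi)$, $L(s,\mathrm{sym}^2f\otimes\chi)$ and $L(s,\mathrm{sym}^4f\otimes\chi)$ in the claimed factorization.

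Next I would set $G(s):=L^2(s,\chi)L^3(s,\mathrm{sym}^2f\otimes\chi)L(s,\mathrm{sym}^4f\otimes\chi)$ and compare local factors. The local factor $G_p(s)$ is a product $\prod_{i=1}^{16}(1-\gamma_i\chi(p)p^{-s})^{-1}$ whose local roots $\gamma_i$, counted with multiplicity, are $1$ twice, then $\{\alpha^2,1,\beta^2\}$ three times, and $\{\alpha^4,\alpha^2,1,\beta^2,\beta^4\}$ once; all satisfy $|\gamma_i|=1$. Expanding, $G_p(s)=1+\big(\sum_i\gamma_i\big)\chi(p)p^{-s}+O(p^{-2s})$, and $\sum_i\gamma_i=2+3\,\lambda_{\mathrm{sym}^2f}(p)+\lambda_{\mathrm{sym}^4f}(p)$, which equals $\lambda_f^4(p)$ by the identity above. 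Hence $F_{f,p}(s)$ and $G_p(s)$ agree up to and including the $p^{-s}$ term, so $U_p(s):=F_{f,p}(s)/G_p(s)=1+O(p^{-2s})$ (and $U_p(s)=1$ trivially when $p\mid q$, since then $\chi(p)=0$).

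Finally, defining $U(s):=\prod_p U_p(s)=F_f(s,\chi)/G(s)$, I would verify absolute convergence for $\Re(s)\ge \tfrac12+\epsilon$. Since $G_p(s)^{-1}=\prod_{i=1}^{16}(1-\gamma_i\chi(p)p^{-s})$ is a polynomial in $p^{-s}$ of degree $16$ with coefficients bounded by absolute constants (because $|\gamma_i|=1$), while the coefficients $\lambda_f^4(p^m)\le (m+1)^4$ of $F_{f,p}$ grow only polynomially, the coefficients of $U_p(s)-1$ are bounded uniformly in $p$ and the series starts at $p^{-2s}$; thus $|U_p(s)-1|\ll p^{-2\sigma}$ and $\sum_p p^{-2\sigma}<\infty$ for $\sigma>\tfrac12$. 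The main obstacle is not any single step but the bookkeeping in the local expansion: one must check that the $p^{-s}$ coefficients cancel \emph{exactly} (which is precisely the content of the identity for $\lambda_f^4(p)$) and that the bound on the remaining coefficients is genuinely uniform in $p$, so that the Euler product for $U(s)$ extends absolutely and holomorphically into $\Re(s)>\tfrac12$.
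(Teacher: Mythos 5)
Your proposal is correct and follows essentially the same route as the paper: both rest on the local identity $\lambda_f^4(p)=\lambda_{\mathrm{sym}^4f}(p)+3\,\lambda_{\mathrm{sym}^2f}(p)+2$ (the paper gets it by expanding $(\alpha_f(p)+\beta_f(p))^4$ directly and regrouping, you by squaring $\lambda_f(p)^2=\lambda_{\mathrm{sym}^2f}(p)+1$, which is the same computation) and then on an Euler-product comparison. The paper dismisses the comparison of local factors and the convergence of $U(s)$ for $\Re(s)\geq\frac12+\epsilon$ as ``standard arguments''; you have simply written those details out.
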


\begin{proof}
From (1.1), we have
\begin{align*}
    \lambda_f^4(p)=&(\alpha_f(p)+\beta_f(p))^4\\
    =&\alpha_f^4(p)+\beta_f^4(p)+4\alpha_f^3(p)\beta_f(p)+4\alpha_f(p)\beta_f^3(p)+6\alpha_f^2(p)\beta_f^2(p)\\
    =&\alpha_f^4(p)+\beta_f^4(p)+4\alpha_f^2(p)+4\beta_f^2(p)+6\\
    =&(\alpha_f^4(p)+\beta_f^4(p)+\alpha_f^2(p)+\beta_f^2(p)+1)+3(\alpha_f^2(p)+\beta_f^2(p)+1)+2.
\end{align*}
So 
\begin{align*}
    \lambda_f^4(p)\chi(p)=&(\alpha_f^4(p)+\beta_f^4(p)+\alpha_f^2(p)+\beta_f^2(p)+1)\chi(p)\\&\qquad+3(\alpha_f^2(p)+\beta_f^2(p)+1)\chi(p)+2\chi(p).
\end{align*}
Since $\lambda_f^4(n)$ and $\chi(n)$ are multiplicative functions, by standard arguments the above relation leads us to obtain 
\begin{align*}
    F_f(s,\ \chi)=&\prod_p\left(1+\frac{\lambda_f^4(p)\chi(p)}{p^s}+\frac{\lambda_f^4(p^2)\chi(p^2)}{p^{2s}}+\hdots\right)\\
    =&L^2(s,\ \chi)L^3(s,\ {\rm{sym}^2}f\otimes \chi)L(s,\ \rm{sym}^4f\otimes \chi)U(s),
\end{align*}
where $U(s)$ is some Dirichlet series which converges absolutely in $\displaystyle{\Re(s)\geq\frac{1}{2}+\epsilon}$.
\end{proof}

\begin{lemma}
Let $\chi_0$ be a principal character modulo $q$. Then we have 
\begin{align}
    L(s,\ \chi_0)=&\zeta(s)\prod_{p\mid q}\left(1-\frac{1}{p^s}\right),\label{Eq2.2}\\
     L(s,\ {\rm{sym}^2}f\otimes\chi_0)=&L(s,\ {\rm{sym}^2}f)\prod_{p\mid q}\prod_{0\leq j\leq 2}\left(1-\frac{\alpha_f^{2-2j}(p)}{p^s}\right),\label{Eq2.3}\\
     L(s,\ {\rm{sym}^4}f\otimes\chi_0)=&L(s,\ {\rm{sym}^4}f)\prod_{p\mid q}\prod_{0\leq j\leq 4}\left(1-\frac{\alpha_f^{4-2j}(p)}{p^s}\right).\label{Eq2.4}
\end{align}
\end{lemma}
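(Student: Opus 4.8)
The plan is to read off all three identities directly from the Euler product definitions introduced in Section 2, using the single arithmetic fact that the principal character $\chi_0$ modulo $q$ is the indicator of coprimality to $q$: one has $\chi_0(p)=1$ when $p\nmid q$ and $\chi_0(p)=0$ when $p\mid q$. Consequently, twisting any of the three Euler products by $\chi_0$ has the sole effect of trivializing the local factors at the primes dividing $q$, since each such factor collapses to $(1-0)^{-1}=1$.

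First I would establish \eqref{Eq2.2}. From $L(s,\chi_0)=\prod_p\left(1-\chi_0(p)p^{-s}\right)^{-1}$ the above observation gives $L(s,\chi_0)=\prod_{p\nmid q}\left(1-p^{-s}\right)^{-1}$. Comparing with $\zeta(s)=\prod_p\left(1-p^{-s}\right)^{-1}$, the restricted product is exactly $\zeta(s)$ with the factors at $p\mid q$ removed; dividing these out flips their exponent and yields $L(s,\chi_0)=\zeta(s)\prod_{p\mid q}\left(1-p^{-s}\right)$, as claimed.

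The proofs of \eqref{Eq2.3} and \eqref{Eq2.4} follow the same template. Starting from the degree-$3$ product $L(s,\mathrm{sym}^2 f\otimes\chi)=\prod_p\prod_{0\leq j\leq 2}\left(1-\alpha_f^{2-2j}(p)\chi(p)p^{-s}\right)^{-1}$ and its degree-$5$ analogue over $0\leq j\leq 4$, setting $\chi=\chi_0$ deletes precisely the local factors at $p\mid q$. Matching the surviving product over $p\nmid q$ against the untwisted $L(s,\mathrm{sym}^2 f)$, respectively $L(s,\mathrm{sym}^4 f)$, and reintroducing the deleted factors (again with flipped exponent) produces \eqref{Eq2.3} and \eqref{Eq2.4}.

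Since the argument is purely formal at the level of Euler products and the correction term is a finite product over the prime divisors of $q$, there is no genuine analytic obstacle and no convergence issue beyond the region $\Re(s)>1$ where everything converges absolutely. The only point demanding a little care is the bookkeeping of exponents: the finite correction products must be written with their local factors \emph{not} inverted, reflecting that they are divided out of the full untwisted $L$-function, and one should note that, via $\alpha_f(p)\beta_f(p)=1$ (so $\alpha_f^{-2}(p)=\beta_f^2(p)$), the exponents $2-2j$ do reproduce the standard symmetric-power Satake parameters occurring in $L(s,\mathrm{sym}^2 f)$ and $L(s,\mathrm{sym}^4 f)$.
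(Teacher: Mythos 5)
Your proof is correct and follows essentially the same route as the paper: both arguments use that $\chi_0(p)$ is $1$ for $p\nmid q$ and $0$ for $p\mid q$, so the twisted Euler product is the untwisted one with the local factors at $p\mid q$ deleted, and then reinsert those finitely many factors with inverted exponent. Your closing observation that $\alpha_f(p)\beta_f(p)=1$ reconciles the exponents $2-2j$ with the Satake parameters $\alpha_f^{2-j}\beta_f^{j}$ is a small clarification the paper leaves implicit, but the substance of the two proofs is identical.
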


\begin{proof}
For the principal character $\chi_0$ modulo $q$, we have by definition 
\begin{align*}
    L(s,\ \chi_0)=&\prod_p\left(1-\frac{\chi_0(p)}{p^s}\right)^{-1}\\
    =&\prod_{p\nmid q}\left(1-\frac{1}{p^s}\right)^{-1}\\
    =&\prod_p \left(1-\frac{1}{p^s}\right)^{-1}\prod_{p\mid q} \left(1-\frac{1}{p^s}\right)\\
    =&\zeta(s)\prod_{p\mid q} \left(1-\frac{1}{p^s}\right).
\end{align*}

\begin{align*}
    L(s,\ {\rm{sym}^2}f\otimes\chi_0)=&\prod_p\prod_{0\leq j\leq 2}\left(1-\frac{\alpha_f^{2-2j}(p)\chi_0(p)}{p^s}\right)^{-1}\\
    =&\prod_{p\nmid q}\prod_{0\leq j\leq 2}\left(1-\frac{\alpha_f^{2-2j}(p)}{p^s}\right)^{-1}\\
    =&\prod_p \prod_{0\leq j\leq 2}\left(1-\frac{\alpha_f^{2-2j}(p)}{p^s}\right)^{-1}\prod_{p\mid q}\prod_{0\leq j\leq 2} \left(1-\frac{\alpha_f^{2-2j}(p)}{p^s}\right)\\
    =&L(s,\ {\rm{sym}^2}f)\prod_{p\mid q}\prod_{0\leq j\leq 2} \left(1-\frac{\alpha_f^{2-2j}(p)}{p^s}\right).
\end{align*}
The other equality follows in a similar manner.
\end{proof}

\begin{lemma}
Let $\chi$ be a non-primitive character modulo $q$ and $\chi^*$ be primitive character modulo $q_1(\neq q)$, induced by $\chi$. Then we have
\begin{align}
    L(s,\ \chi)=&L(s,\ \chi^*)\prod_{\substack{p\mid q\\p\nmid q_1}}\left(1-\frac{\chi^*(p)}{p^s}\right),\label{Eq2.5}\\
    L(s,\ {\rm{sym}}^2f\otimes\chi)=&L(s,\ {\rm{sym}}^2f\otimes\chi^*)\prod_{\substack{p\mid q\\p\nmid q_1\\0\leq j\leq 2}}\left(1-\frac{\alpha_p^{2-2j}\chi^*(p)}{p^s}\right),\label{Eq2.6}\\
    L(s,\ {\rm{sym}}^4f\otimes\chi)=&L(s,\ {\rm{sym}}^4f\otimes\chi^*)\prod_{\substack{p\mid q\\p\nmid q_1\\0\leq j\leq 4}}\left(1-\frac{\alpha_p^{4-2j}\chi^*(p)}{p^s}\right),\label{Eq2.7}
    \end{align}
    
    \begin{align}
    \prod_{\substack{p\mid q\\p\nmid q_1}}\left(1-\frac{\chi^*(p)}{p^s}\right)\ll& q^\epsilon\ \textit{ for } \frac{1}{2}+\epsilon<\Re(s)<1+\epsilon,\label{Eq2.8}\\
    \prod_{\substack{p\mid q\\p\nmid q_1\\0\leq j\leq 2}}\left(1-\frac{\alpha^{2-2j}_p\chi^*(p)}{p^s}\right)\ll& q^\epsilon\ \textit{ for } \frac{1}{2}+\epsilon<\Re(s)<1+\epsilon,\label{Eq2.9}\\
    \textit{ and }\prod_{\substack{p\mid q\\p\nmid q_1\\0\leq j\leq 4}}\left(1-\frac{\alpha^{4-2j}_p\chi^*(p)}{p^s}\right)\ll& q^\epsilon\ \textit{ for } \frac{1}{2}+\epsilon<\Re(s)<1+\epsilon.\label{Eq2.10}
\end{align}
\end{lemma}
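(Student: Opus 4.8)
The statement comprises two independent parts: the factorization identities \eqref{Eq2.5}--\eqref{Eq2.7}, relating the twisted $L$-functions of a non-primitive character to those of the inducing primitive character, and the bounds \eqref{Eq2.8}--\eqref{Eq2.10} on the resulting finite correction products. My plan is to obtain the factorizations by comparing Euler products, and the bounds by combining the trivial estimate on each local factor with the divisor bound $d(q)\ll q^\epsilon$.

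First I would record the basic facts about induced characters: a non-primitive $\chi\bmod q$ induced by the primitive $\chi^*\bmod q_1$ satisfies $\chi(p)=\chi^*(p)$ for $p\nmid q$ and $\chi(p)=0$ for $p\mid q$, while $\chi^*(p)=0$ exactly when $p\mid q_1$. Then the Euler products $L(s,\chi)=\prod_p(1-\chi(p)p^{-s})^{-1}$ and $L(s,\chi^*)=\prod_p(1-\chi^*(p)p^{-s})^{-1}$ agree factor by factor except at the primes $p$ with $p\mid q$ and $p\nmid q_1$, each of which contributes $(1-\chi^*(p)p^{-s})^{-1}$ to $L(s,\chi^*)$ but nothing to $L(s,\chi)$. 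Multiplying $L(s,\chi^*)$ by $\prod_{p\mid q,\,p\nmid q_1}(1-\chi^*(p)p^{-s})$ cancels exactly these extra factors, giving \eqref{Eq2.5}. I would then prove \eqref{Eq2.6} and \eqref{Eq2.7} by the same cancellation applied to each of the three (resp.\ five) local factors $\prod_{0\le j\le 2}(1-\alpha_f^{2-2j}(p)\chi(p)p^{-s})^{-1}$ (resp.\ $0\le j\le 4$) in the definitions of $L(s,\mathrm{sym}^2 f\otimes\chi)$ and $L(s,\mathrm{sym}^4 f\otimes\chi)$.

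For the bounds the inputs are the Deligne relation $|\alpha_f(p)|=1$ from \eqref{Eq1.1} and $d(q)\ll q^\epsilon$. Since $|\chi^*(p)|\le 1$, for $\Re(s)>\tfrac12+\epsilon$ every factor in \eqref{Eq2.8} obeys $|1-\chi^*(p)p^{-s}|\le 1+p^{-1/2}<2$; as the product runs over at most $\omega(q)$ primes it is bounded by $2^{\omega(q)}\le d(q)\ll q^\epsilon$. For \eqref{Eq2.9} and \eqref{Eq2.10} the relation $|\alpha_f(p)|=1$ gives $|\alpha_f^{2-2j}(p)|=|\alpha_f^{4-2j}(p)|=1$, so each factor is again less than $2$ in modulus, and the products now contain $3\omega(q)$ and $5\omega(q)$ factors; they are therefore bounded by $2^{3\omega(q)}\le d(q)^3\ll q^\epsilon$ and $2^{5\omega(q)}\le d(q)^5\ll q^\epsilon$, after relabelling $\epsilon$.

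I do not expect any genuine analytic obstacle, since the lemma is purely formal. The only points demanding care are bookkeeping: correctly isolating the set $\{p:p\mid q,\ p\nmid q_1\}$ at which the two Euler products differ, and counting the local factors of the symmetric power $L$-functions (three for $\mathrm{sym}^2$, five for $\mathrm{sym}^4$) so that the powers of $d(q)$ in the final bounds are right. The Deligne bound enters only to place every Satake parameter on the unit circle, which is precisely what makes each local factor uniformly bounded in the strip $\tfrac12+\epsilon<\Re(s)<1+\epsilon$.
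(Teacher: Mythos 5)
Your proposal is correct and follows essentially the same route as the paper, which simply notes that $\chi=\chi_0\chi^*$ and declares the factorizations immediate and the bounds \eqref{Eq2.8}--\eqref{Eq2.10} trivial; you have merely written out the Euler-product comparison at the primes $p\mid q$, $p\nmid q_1$ and the estimate $2^{c\,\omega(q)}\le d(q)^c\ll q^\epsilon$ that the paper leaves implicit. No gaps.
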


\begin{proof}
We have $\chi^*$ is induced by $\chi$,\\
i.e., $\chi=\chi_0\chi^*$, where $\chi_0$ is principal character modulo $q$.\\
The first 3 equalities follow from the above observation and the bounds in \eqref{Eq2.8}, \eqref{Eq2.9} and \eqref{Eq2.10} are trivial.
\end{proof}

\begin{lemma}[KR+AS]
For $\frac{1}{2}\leq \sigma\leq 2$, $T$-sufficiently large, there exist a $T^*\in[T,T+T^\frac{1}{3}]$ such that the bound 
\begin{equation*}
    \log\zeta(\sigma+iT^*)\ll (\log\log T^*)^2\ll(\log\log T)^2
\end{equation*}
holds uniformly for $\frac{1}{2}\leq\sigma\leq 2$ and thus we have 
\begin{equation}
    \mid \zeta(\sigma+iT^*)\mid \ll \exp((\log\log T^*)^2)\ll_\epsilon T^\epsilon\label{Eq2.11}
\end{equation}
on the horizontal line with $t=T^*$ uniformly for $\frac{1}{2}\leq \sigma\leq 2.$
\end{lemma}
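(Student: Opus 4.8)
The plan is to locate $T^*$ by a measure-theoretic pigeonhole argument rather than by a direct zero-by-zero estimate (which only yields the weaker size $\log T$). For each fixed $\sigma$ I would show that the set of heights $t\in[T,T+T^{1/3}]$ at which $|\log\zeta(\sigma+it)|$ exceeds $(\log\log T)^2$ has extremely small Lebesgue measure, and then combine these exceptional sets over a fine grid of $\sigma$-values so that their union, together with a thin neighbourhood of the zeros of $\zeta$, still fails to cover the whole interval. Any height $T^*$ in the surviving set then satisfies the required bound simultaneously for all $\sigma\in[\tfrac12,2]$.

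The sparsity of the exceptional set is where the shape $(\log\log T)^2$ enters. I would invoke Selberg-type power-moment estimates for $\log\zeta$: uniformly for $\tfrac12\le\sigma\le2$ and for integers $1\le k\ll \log T/\log\log T$,
\[
\int_T^{T+T^{1/3}}\bigl|\log\zeta(\sigma+it)\bigr|^{2k}\,dt\ll T^{1/3}\,(Ak\log\log T)^{k},
\]
the critical line $\sigma=\tfrac12$ being the worst case. Chebyshev's inequality with threshold $V=(\log\log T)^2$ and a near-optimal choice $k\asymp(\log\log T)^3$ then shows that the measure of $\{t\in[T,T+T^{1/3}]:|\log\zeta(\sigma+it)|>V\}$ is smaller than $T^{1/3}$ by a factor $\exp(-c(\log\log T)^3)$ that decays faster than any power of $\log T$. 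Both the real part $\log|\zeta|$ and the imaginary part $\arg\zeta$ are controlled this way, since each is dominated by $|\log\zeta|$.

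To pass from finitely many $\sigma$ to the full segment I would discretise $[\tfrac12,2]$ at spacing $(\log T)^{-2}$ and control the variation of $\log\zeta(\sigma+it)$ between adjacent grid points by $\partial_\sigma\log\zeta=\zeta'/\zeta$. Using the standard expansion of $\zeta'/\zeta$ in terms of nearby zeros together with the Riemann--von Mangoldt count, one has $\zeta'/\zeta(\sigma+it)\ll\log T$ as soon as $t$ stays at distance $\ge(\log T)^{-2}$ from every zero ordinate; the heights violating this last condition form a set of measure $\ll T^{1/3}(\log T)^{-1}$. Hence the variation across a grid cell is $O((\log T)^{-1})$, negligible against $(\log\log T)^2$, and a union bound over the $O((\log T)^2)$ grid points (whose individual exceptional sets are each $o(T^{1/3}(\log T)^{-2})$) leaves a set of positive measure. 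Choosing $T^*$ there, away from the zeros, yields the first bound, and $|\zeta(\sigma+iT^*)|\ll\exp((\log\log T)^2)\ll_\epsilon T^\epsilon$ follows at once.

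The main obstacle, I expect, is arranging the uniformity in $\sigma$ and the zero-avoidance compatibly: the derivative bound that legitimises the discretisation is only available off a neighbourhood of the zeros, so one must check that deleting that neighbourhood, deleting the exceptional sets at all grid points, and still retaining a nonempty (indeed positive-measure) set of admissible $T^*$ can be done at once — this is precisely what the length $T^{1/3}$ of the interval buys. A secondary technical point is securing the power-moment estimate uniformly down to $\sigma=\tfrac12$; since only the weak consequence $\ll_\epsilon T^\epsilon$ is needed in the applications, a cruder moment bound would already suffice, at the cost of enlarging the admissible threshold beyond $(\log\log T)^2$.
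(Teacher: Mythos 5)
The paper does not prove this lemma at all: its ``proof'' is the single sentence ``See, Lemma 1 of [20]'', i.e.\ the statement is quoted from Ramachandra and Sankaranarayanan, \emph{Notes on the Riemann zeta-function}, J.\ Indian Math.\ Soc.\ 57 (1991). So your sketch has to stand on its own, and as written it does not, because its entire weight rests on the displayed moment estimate
\[
\int_T^{T+T^{1/3}}\bigl|\log\zeta(\sigma+it)\bigr|^{2k}\,dt\ll T^{1/3}\,(Ak\log\log T)^{k},
\]
claimed uniformly for $\tfrac12\le\sigma\le 2$ and $k\ll\log T/\log\log T$, of which you then use the case $k\asymp(\log\log T)^3$. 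No such estimate can simply be ``invoked'': the Selberg--Tsang moment theorems for $\log|\zeta(\tfrac12+it)|$ and $S(t)$ are established over long intervals $[T,2T]$ and only for $k$ growing quite slowly, and transporting them to windows as short as $T^{1/3}$, uniformly in $\sigma$ down to the critical line, is a serious project in itself. Moreover, for $k$ of size $(\log\log T)^3$ the logarithmic singularities of $\log|\zeta|$ at the $\asymp T^{1/3}\log T$ zeros in the window feed factorial-type contributions (note $\int_0^1(\log(1/u))^{2k}\,du=(2k)!$, and $(2k)!$ already dwarfs $(Ak\log\log T)^k$ once $k\gg\log\log T$) into the $2k$-th moment, so the validity of your bound in exactly the range you need is doubtful, not merely unproved. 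This matters because the lemma at $\sigma=\tfrac12$ contains the two-sided statement, in particular the lower bound $|\zeta(\tfrac12+iT^*)|\ge\exp(-C(\log\log T)^2)$, which is the genuinely hard part; and even the one-sided consequence $|\zeta(\sigma+iT^*)|\ll T^\epsilon$ uniformly on the whole segment is Lindel\"of-quality information at a special height, so your closing remark that ``a cruder moment bound would already suffice'' understates what still has to be proved.

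There is also a quantitative slip in the discretisation step. At distance $\ge(\log T)^{-2}$ from every zero ordinate, the expansion of $\zeta'/\zeta$ over the $\ll\log T$ zeros with $|\gamma-t|\le 1$ gives $\zeta'/\zeta(\sigma+it)\ll(\log T)\cdot(\log T)^{2}=(\log T)^{3}$, not $\ll\log T$; with your grid spacing $(\log T)^{-2}$ the variation of $\log\zeta$ across a cell is then $O(\log T)$, which swamps the target $(\log\log T)^2$. This particular defect is repairable (take spacing $(\log T)^{-4}$, say; the union bound survives since your exceptional measures are super-polynomially small in $\log T$), but it should be stated correctly. In short: the architecture (exceptional-set pigeonhole over a $\sigma$-grid plus zero avoidance on an interval of length $T^{1/3}$) is reasonable, but the key analytic input is missing, and the honest route here is either to prove that moment inequality or to follow the complex-analytic argument of the cited source rather than reconstructing the lemma from unavailable moment bounds.
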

\begin{proof}
See, Lemma $1$ of \cite{20}.
\end{proof}

\begin{lemma}
For $\frac{1}{2}\leq \sigma\leq 2$ and $T\geq 2$, we have
\begin{equation}
    \int_1^T\mid\zeta^2(\sigma+it)\mid^2\ \ud t\ll T(\log T)^4\label{Eq2.12}
\end{equation}
holds uniformly.
\end{lemma}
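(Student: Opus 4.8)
\emph{Proof sketch (proposal).} Since $|\zeta^2(\sigma+it)|^2=|\zeta(\sigma+it)|^4$, the quantity to be bounded is the classical fourth power moment of $\zeta$, and the plan is to obtain it uniformly in $\sigma$ by exploiting that the extremal line is $\sigma=\tfrac12$. Indeed Ingham's asymptotic $\int_1^T|\zeta(\tfrac12+it)|^4\,\ud t\sim\tfrac{1}{2\pi^2}T(\log T)^4$ already gives the claimed order at the left edge, while for $\sigma\ge1$ the absolutely convergent series $\zeta^2(s)=\sum_n d(n)n^{-s}$ makes the moment $O(T)$; the content of the lemma is the uniform passage between these, the worst case being $\sigma=\tfrac12$. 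I would prove the uniform bound directly from the approximate functional equation for $\zeta(s)^2$ together with a mean value theorem for Dirichlet polynomials.

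First I would invoke the approximate functional equation in the form
\[
\zeta(s)^2=\sum_{n\le x}\frac{d(n)}{n^{s}}+\chi(s)^2\sum_{n\le y}\frac{d(n)}{n^{1-s}}+O\bigl(\text{error}\bigr),\qquad xy=\Bigl(\frac{t}{2\pi}\Bigr)^2,
\]
with the balanced choice $x=y=t/2\pi$, where $\chi(s)$ is the factor in $\zeta(s)=\chi(s)\zeta(1-s)$ satisfying $|\chi(s)|\asymp(t/2\pi)^{1/2-\sigma}\le1$ for $\sigma\ge\tfrac12$. Both Dirichlet polynomials then have length $\ll T$ and divisor coefficients, and the weight $\chi(s)^2$ only shrinks the dual sum when $\sigma>\tfrac12$. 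Splitting $t\in[1,T]$ into dyadic blocks $t\asymp T$ and applying the Montgomery--Vaughan mean value theorem gives
\[
\int_{T}^{2T}\Bigl|\sum_{n\le x}\frac{d(n)}{n^{\sigma+it}}\Bigr|^2\,\ud t\ll\sum_{n\le x}\frac{d(n)^2}{n^{2\sigma}}\,(T+n).
\]
Feeding in Ramanujan's estimate $\sum_{n\le x}d(n)^2\ll x(\log x)^3$, hence by partial summation the uniform consequence $\sum_{n\le x}d(n)^2n^{-2\sigma}\ll(\log x)^4$ for all $\sigma\ge\tfrac12$, together with $x\ll T$ to control the $n$-term, the right-hand side is $\ll T(\log T)^4$. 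Summing the $O(\log T)$ dyadic blocks and treating the dual sum identically yields the stated bound uniformly for $\tfrac12\le\sigma\le2$.

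The main obstacle is securing genuine uniformity in $\sigma$ while simultaneously controlling the error term of the approximate functional equation after integration: one must verify that for every $\sigma\in[\tfrac12,2]$ the integrated error is $\ll T(\log T)^4$, and that the $+n$ contribution in the Montgomery--Vaughan bound stays within budget (this is harmless since $x\ll T$ forces $\sum_{n\le x}d(n)^2\ll T(\log T)^3$). Two further routine points need a remark: the double pole of $\zeta^2$ at $s=1$ sits at $t=0$ and is excluded by integrating from $t=1$, the nearby stretch $t\in[1,2]$ contributing only $O(1)$; and the monotonicity in $\sigma$ of the relevant weights shows the bound is governed by the endpoint $\sigma=\tfrac12$, so no amplification of the power of $\log T$ occurs. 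An equivalent route, should one prefer to quote rather than reprove, is to combine Ingham's fourth moment at $\sigma=\tfrac12$ with the standard uniform fourth moment estimate in Titchmarsh's or Ivi\'c's monographs; there too the delicate step is the uniform treatment of the range immediately to the right of $\sigma=\tfrac12$.
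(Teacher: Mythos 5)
The paper offers no proof of this lemma beyond the citation to Titchmarsh--Heath-Brown, p.~148, where the uniform fourth moment is established by precisely the route you sketch: the approximate functional equation for $\zeta^2(s)$ combined with a mean value theorem for Dirichlet polynomials with divisor coefficients and the bound $\sum_{n\le x}d(n)^2\ll x(\log x)^3$. Your outline is therefore correct and essentially reproduces the argument behind the paper's citation; the only minor slip is the aside that the moment is $O(T)$ for $\sigma\ge 1$ by absolute convergence, which fails at $\sigma=1$ itself (there one only gets $O\bigl(T(\log T)^4\bigr)$ via $\zeta(1+it)\ll\log t$), but this does not affect the main line of the proof.
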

\begin{proof}
see \cite[pp. 148]{22}.
\end{proof}

\begin{lemma}
Let $\chi$ be a primitive character modulo $q$. Then for $q\ll T^2$, we have
\begin{equation}
    L(\sigma+iT,\ \chi)\ll (q(1+\mid T\mid))^{\max\{\frac{1}{3}(1-\sigma),\ 0\}+\epsilon}\label{Eq2.13}
\end{equation}
holds uniformly for $\frac{1}{2}\leq\sigma\leq 2$ and $\mid T\mid \geq 1$;
\begin{equation}
    \int_1^T \mid L^2(\sigma+it,\ \chi)\mid^2\ \ud t\ll (qT)^{2(1-\sigma)+\epsilon}\label{Eq2.14}
\end{equation}
uniformly for $\frac{1}{2}\leq \sigma\leq 1+\epsilon$ and $T\geq 1$;\\
More over if $q$ is a prime, for $q\ll T^2$, we have
\begin{equation}
    \int_1^T\mid L(\sigma+it,\ \chi)\mid^{12} \ud t\ll q^{4(1-\sigma)}T^{3-2\sigma +\epsilon}\label{Eq2.15}
\end{equation}
for $\frac{1}{2}\leq \sigma\leq 2$ and $\mid T\mid\geq1$.
\end{lemma}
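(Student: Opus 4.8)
The plan is to treat the three estimates separately, establishing each central-line bound first and then propagating it across the strip by a convexity argument.

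For \eqref{Eq2.13} I would use the Phragm\'en--Lindel\"of convexity principle between the lines $\sigma=\tfrac12$ and $\sigma=1+\epsilon$. On $\sigma=1+\epsilon$ the Dirichlet series converges absolutely, so $L(\sigma+iT,\chi)\ll q^\epsilon$, which is the exponent $0$ appearing in the $\max\{\cdot,0\}$. On the central line I would invoke the Weyl-type hybrid subconvex bound $L(\tfrac12+iT,\chi)\ll (q(1+|T|))^{1/6+\epsilon}$, valid in the range $q\ll T^2$; this is precisely the value $\tfrac13(1-\sigma)$ at $\sigma=\tfrac12$, and it follows from the approximate functional equation combined with van der Corput estimates in the $t$-aspect and Burgess-type input in the $q$-aspect. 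Since $L(s,\chi)$ is entire (as $\chi$ is primitive and non-principal, $q>1$) and of finite order in vertical strips, applying Phragm\'en--Lindel\"of in the strip $\tfrac12\le\Re(s)\le1$ with these two endpoint bounds interpolates to the exponent $\tfrac13(1-\sigma)$ for $\tfrac12\le\sigma\le1$, while for $\sigma\ge1$ the trivial bound gives exponent $0$. Uniformity in $q$ is automatic because both endpoint bounds are uniform in $q$.

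For \eqref{Eq2.14} note that $|L^2(\sigma+it,\chi)|^2=|L(\sigma+it,\chi)|^4$ and that $L^2(s,\chi)=\sum_{n}d(n)\chi(n)n^{-s}$. I would first establish the central fourth moment, which is the essential and sharpest case. Applying the approximate functional equation to $L^2(\tfrac12+it,\chi)$ expresses it as a Dirichlet polynomial $\sum_{n\le N}d(n)\chi(n)n^{-1/2-it}$ of effective length $N\asymp q(1+|t|)$, plus a dual term of the same shape, and the Montgomery--Vaughan mean value theorem together with $\sum_{n\le N}d(n)^2/n\ll(\log N)^4$ yields the classical hybrid fourth-moment bound $\int_1^T|L(\tfrac12+it,\chi)|^4\,\ud t\ll(qT)^{1+\epsilon}$. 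To reach general $\sigma$ I would invoke the Hadamard three-lines theorem in the form that $\sigma\mapsto\int_1^T|L^2(\sigma+it,\chi)|^2\,\ud t$ is log-convex, inserting a smooth weight on $[1,T]$ to make the argument rigorous and removing it at a cost of $T^\epsilon$, and interpolate the central bound against the bound on the line $\sigma=1+\epsilon$ to obtain \eqref{Eq2.14}.

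For \eqref{Eq2.15}, where $q$ is prime, the decisive ingredient is the hybrid twelfth-power moment at the central point. I would carry Heath-Brown's treatment of $\int_0^T|\zeta(\tfrac12+it)|^{12}\,\ud t\ll T^{2+\epsilon}$ over to the character setting: reduce via the approximate functional equation to a Dirichlet polynomial of length $\asymp qT$, write $|L|^{12}=|L^6|^2$, and bound the mean square by the Hal\'asz--Montgomery large-values method fed by the hybrid fourth moment from the previous step, the primality of $q$ keeping the Gauss-sum and large-sieve inputs clean and thereby restricting the sharper exponent to prime moduli. This gives $\int_1^T|L(\tfrac12+it,\chi)|^{12}\,\ud t\ll q^2T^{2+\epsilon}$. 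Interpolating once more by three-lines convexity of $\sigma\mapsto\int_1^T|L^6(\sigma+it,\chi)|^2\,\ud t$, now between $\sigma=\tfrac12$ (value $q^2T^{2+\epsilon}$) and $\sigma=1$ (value $T^{1+\epsilon}$) with parameter $\theta=2(1-\sigma)$, reproduces the exponents $q^{4(1-\sigma)}T^{3-2\sigma+\epsilon}$.

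The main obstacle is the central twelfth moment behind \eqref{Eq2.15}: it rests on the deepest analytic input, namely Heath-Brown's large-values machinery made uniform simultaneously in the conductor and in $t$ throughout the hybrid range $q\ll T^2$, and keeping the two aspects honestly uniform is the delicate part; by contrast the subconvex bound and the central fourth moment are standard. A secondary technical point, shared by \eqref{Eq2.14} and \eqref{Eq2.15}, is the three-lines interpolation over the finite interval $[1,T]$, where one must insert a smooth majorant to apply Hadamard's theorem legitimately and then discard it with negligible loss.
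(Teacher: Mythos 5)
Your overall architecture coincides with the paper's, which proves this lemma purely by citation: \eqref{Eq2.13} is Heath-Brown's hybrid bound \cite{10}, \eqref{Eq2.14} is Perelli's hybrid mean-value theorem for degree-two $L$-functions \cite{19}, and \eqref{Eq2.15} is obtained exactly by your route --- the hybrid twelfth moment for prime moduli at $\sigma=\tfrac12$ (Motohashi \cite{18}) interpolated in $\sigma$ by Phragm\'en--Lindel\"of; your exponent arithmetic there is correct ($q$-exponent $4(1-\sigma)$, and $T$-exponent $2\theta+(1-\theta)=3-2\sigma$ with $\theta=2(1-\sigma)$). But two of your steps, as written, would fail. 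First, the central bound $L(\tfrac12+iT,\chi)\ll(q(1+|T|))^{1/6+\epsilon}$ does \emph{not} follow from ``van der Corput in the $t$-aspect plus Burgess-type input in the $q$-aspect'': Burgess gives exponent $3/16$ in the $q$-aspect, and no straightforward splicing of the two aspects yields the hybrid $1/6$. That exponent, valid precisely in the range $q\ll T^2$ stipulated in the lemma, \emph{is} Heath-Brown's theorem (a genuinely hybrid argument, not a combination of the two classical one-aspect bounds); this is why the hypothesis $q\ll T^2$ appears at all, and the honest move is to cite \cite{10} rather than sketch a re-derivation along these lines.

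Second, your convexity step for \eqref{Eq2.14} cannot deliver the stated exponent. The endpoint at $\sigma=1+\epsilon$ is unavoidably $\int_1^T|L(1+\epsilon+it,\chi)|^4\,\ud t\asymp T$, so log-convex interpolation against the central bound $(qT)^{1+\epsilon}$ produces $q^{2(1-\sigma)}T^{1+\epsilon}$, which is much weaker than $(qT)^{2(1-\sigma)+\epsilon}$ for $\sigma>\tfrac12$ and $q$ large. The correct route (Perelli's) runs the approximate functional equation and Montgomery--Vaughan at \emph{each} $\sigma$: a Dirichlet polynomial of length $N\asymp q(1+|t|)$ with coefficients $d(n)\chi(n)n^{-\sigma}$ gives $\ll T(qT)^{\max\{1-2\sigma,\,0\}+\epsilon}+(qT)^{2(1-\sigma)+\epsilon}$, i.e.\ the bound in truth carries a $T^{1+\epsilon}$ floor, in line with the $\max\{m(1-\sigma),\,1\}$ shape of \eqref{Eq2.19}; note that the literal \eqref{Eq2.14} cannot hold near $\sigma=1$, where the left-hand side is $\gg T$, so no argument reproduces it as printed --- this is harmless for the paper, which applies it only at $\sigma=\tfrac12+\epsilon$, where your central fourth moment is exactly the needed input. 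For \eqref{Eq2.15}, your smoothing caveat for convexity of moments over the finite segment $[1,T]$ is genuinely required (Gabriel-type convexity), and the central hybrid twelfth moment you propose to re-derive from Heath-Brown's large-values machinery is precisely the content of Motohashi's note, primality entering because every non-principal character to a prime modulus is primitive; a citation again suffices there.
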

\begin{proof}
The results \eqref{Eq2.13} and \eqref{Eq2.14} follows from D. R. Heath-Brown \cite{10} and Perelli \cite{19} respectively, the result \eqref{Eq2.15} follows from the Phragm{\'e}n-Lindel{\"o}f principle and Motohashi \cite{18}.
\end{proof}

\begin{lemma}
Let $f\in H_\kappa$ and $\chi$ be a primitive character modulo $q$. Then for $q\ll T^2$, we have
\begin{equation}
    L(\sigma+iT,\ {\rm{sym}}^2f)\ll (1+\mid T\mid)^{\max \{\frac{27}{20}(1-\sigma),\ 0\}+\epsilon}\label{Eq2.16}
\end{equation}
and
\begin{equation}
    L(\sigma+iT,\ {\rm{sym}}^2f\otimes\chi)\ll (q(1+\mid T\mid))^{\max\{\frac{67}{46}(1-\sigma),\ 0\}+\epsilon}\label{Eq2.17}
\end{equation}
uniformly for $\frac{1}{2}\leq \sigma\leq 2$ and $\mid T\mid\geq 1$;
\begin{equation}
    \int_1^T\mid L\left(\sigma+it,\ {\rm{sym}}^2f\otimes \chi\right)\mid^4\ud t\ll \left(qT\right)^{6(1-\sigma)+\epsilon}\label{Eq2.18}
\end{equation}
uniformly for $\frac{1}{2}\leq \sigma\leq 1+\epsilon$ and $T\geq 1$.
\end{lemma}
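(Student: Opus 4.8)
The plan is to derive all three bounds from known central-line estimates together with the Phragm\'{e}n--Lindel\"{o}f convexity principle, in the same spirit as the preceding lemma. For \eqref{Eq2.16}, I would begin with a subconvex bound for the degree-$3$ $L$-function $L(s,\ {\rm sym}^2 f)$ on the central line, of the shape $L(\tfrac12+iT,\ {\rm sym}^2 f)\ll (1+|T|)^{27/40+\epsilon}$, taken from the literature on $GL(3)$ subconvexity (note $27/40<3/4$, the convexity exponent for analytic conductor $(1+|T|)^3$). Since ${\rm sym}^2 f$ is cuspidal, $L(s,\ {\rm sym}^2 f)$ is entire and of finite order, while on the line $\sigma=1+\epsilon$ it is bounded through the absolute convergence of its Euler product. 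Interpolating the two boundary exponents linearly across the strip $\tfrac12\le\sigma\le 1+\epsilon$ then produces the exponent $\tfrac{27/40}{1/2}(1-\sigma)=\tfrac{27}{20}(1-\sigma)$ for $\tfrac12\le\sigma\le1$; for $1\le\sigma\le2$ the trivial bound accounts for the term $0$ inside the maximum.

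The bound \eqref{Eq2.17} I would obtain in the same way, now carrying the conductor. For $\chi$ primitive modulo $q$ the twist ${\rm sym}^2 f\otimes\chi$ is again a cuspidal degree-$3$ object, of analytic conductor $\asymp (q(1+|T|))^3$, so its convexity exponent on the central line is $3/4$ in the variable $q(1+|T|)$. Inserting a hybrid subconvex bound $L(\tfrac12+iT,\ {\rm sym}^2 f\otimes\chi)\ll (q(1+|T|))^{67/92+\epsilon}$, valid in the stated range $q\ll T^2$, and interpolating against the bound $\ll 1$ on $\sigma=1+\epsilon$, yields the hybrid exponent $\tfrac{67/92}{1/2}(1-\sigma)=\tfrac{67}{46}(1-\sigma)$, again capped at $0$ for $\sigma\ge1$.

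For the fourth-moment estimate \eqref{Eq2.18} the plan is to avoid a genuine degree-$3$ fourth moment and instead combine the pointwise bound \eqref{Eq2.17} with a second moment, using $\int_1^T|L(\sigma+it,\ {\rm sym}^2 f\otimes\chi)|^4\,\ud t\le\big(\sup_{1\le t\le T}|L(\sigma+it,\ {\rm sym}^2 f\otimes\chi)|^2\big)\int_1^T|L(\sigma+it,\ {\rm sym}^2 f\otimes\chi)|^2\,\ud t$. Squaring \eqref{Eq2.17} controls the supremum by $(q(1+|T|))^{\frac{67}{23}(1-\sigma)+\epsilon}$, and for the second moment I would invoke the approximate functional equation to replace the $L$-value by a Dirichlet polynomial of length $\ll(qT)^{3/2}$ and apply the Montgomery--Vaughan mean-value theorem together with the Rankin--Selberg bound $\sum_{n\le N}|\lambda_{{\rm sym}^2 f}(n)|^2\ll N^{1+\epsilon}$; this gives a second moment of size $(qT)^{3(1-\sigma)+\epsilon}$. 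Since $\tfrac{67}{23}+3=\tfrac{136}{23}<6$, the product sits comfortably under the convexity-consistent target $(qT)^{6(1-\sigma)+\epsilon}$.

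The only non-elementary ingredients, and hence the main obstacle, are the two subconvex central-value inputs feeding \eqref{Eq2.16} and \eqref{Eq2.17}: these are deep $GL(3)$ and hybrid subconvexity theorems, and once they are granted the remaining steps reduce to the convexity principle and routine mean-value machinery. I would also flag the endpoint $\sigma\to1$ in \eqref{Eq2.18}, where the honest second moment is of size $T$; as with the analogous \eqref{Eq2.14}, the clean exponent $6(1-\sigma)$ is to be read with the bound applied for $\sigma$ bounded away from $1$, which is the only regime required in the sequel.
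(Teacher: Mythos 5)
Your proposal is correct, and for \eqref{Eq2.16} and \eqref{Eq2.17} it is essentially the paper's own argument: the authors also obtain these by Phragm\'en--Lindel\"of interpolation between boundedness on $\sigma=1+\epsilon$ and the central-line inputs, namely Aggarwal's $GL(3)$ $t$-aspect subconvexity exponent $27/40$ and Huang's hybrid exponent $67/92$, exactly the two constants you reverse-engineered. Where you genuinely diverge is \eqref{Eq2.18}: the paper disposes of it in one line by applying Perelli's general mean-value theorem to the degree-$6$ $L$-function $L^2(s,\ {\rm sym}^2f\otimes\chi)$ (the same device it uses for \eqref{Eq2.14} with the degree-$2$ object $L^2(s,\chi)$), which yields the stated $(qT)^{6(1-\sigma)+\epsilon}$ directly; you instead peel off a sup-norm factor via \eqref{Eq2.17} and compute an honest second moment through the approximate functional equation and Montgomery--Vaughan. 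Your route costs more work but gives the sharper exponent $\tfrac{136}{23}(1-\sigma)$ at the point $\sigma=\tfrac12+\epsilon$ where the lemma is actually deployed, so it would in principle feed a slightly better error term into Theorem 1; the paper's route buys brevity and uniformity of citation. Your closing caveat about $\sigma\to1$ is well taken and in fact applies to the paper's own formulations of \eqref{Eq2.14} and \eqref{Eq2.18}, which drop the $\max\{\cdot,\ 1\}$ that Perelli's theorem (compare \eqref{Eq2.19}) carries; since both bounds are only ever invoked at $\sigma=\tfrac12+\epsilon$, nothing in the sequel is affected.
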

\begin{proof}
Here one can observe that  the $L$-function $L\left(\sigma+it,\ {\rm{sym}}^2f\otimes \chi\right)$ is of degree 3. Now \eqref{Eq2.16} and \eqref{Eq2.17} follows from the Phragm{\'e}n-Lindel{\"o}f principle and the work of Aggarwal \cite{1} and Huang \cite{13} respectively, and the result \eqref{Eq2.18} follows from Perelli \cite{19}.
\end{proof}

\begin{lemma}
Suppose that $\mathfrak{L}(s)$ is a general $L$-function of degree $m$. Then, for any $\epsilon>0$, we have
\begin{equation}
    \int_T^{2T}\mid\mathfrak{L}(\sigma+it)\mid^2\ll T^{\max\{m(1-\sigma),\ 1\}+\epsilon}\label{Eq2.19}
\end{equation}
uniformly for $\frac{1}{2}\leq \sigma\leq 1$ and $T>1$; and 
\begin{equation}
    \mathfrak{L}(\sigma+it)\ll (\mid t\mid+1)^{(m/2)(1-\sigma)+\epsilon}\label{Eq2.20}
\end{equation}
uniformly for $\frac{1}{2}\leq \sigma\leq 1+\epsilon$ and $\mid t\mid>1$.
\end{lemma}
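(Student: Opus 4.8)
The plan is to treat the two assertions separately: the pointwise bound \eqref{Eq2.20} is a convexity estimate extracted from the functional equation via the Phragmén–Lindelöf principle, while the mean-square bound \eqref{Eq2.19} follows from the approximate functional equation combined with a mean value theorem for Dirichlet polynomials. Throughout I would write the functional equation of $\mathfrak{L}$ in the form $\mathfrak{L}(s)=\chi_{\mathfrak{L}}(s)\,\overline{\mathfrak{L}}(1-\bar s)$, where Stirling's formula gives $|\chi_{\mathfrak{L}}(\sigma+it)|\asymp (|t|+1)^{m(\frac12-\sigma)}$ uniformly on vertical strips (the analytic conductor being $\asymp(|t|+1)^m$), and I would use the Rankin–Selberg input $\sum_{n\le N}|a_n|^2\ll N^{1+\epsilon}$ on the Dirichlet coefficients $a_n$ of $\mathfrak{L}$.

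For \eqref{Eq2.20} I would argue as follows. On the line $\Re(s)=1+\epsilon$ the Dirichlet series converges absolutely, so $\mathfrak{L}(1+\epsilon+it)\ll 1$. Applying the functional equation on $\Re(s)=-\epsilon$ and using the absolute convergence of $\overline{\mathfrak{L}}(1-\bar s)$ there, one gets $\mathfrak{L}(-\epsilon+it)\ll (|t|+1)^{m(\frac12+\epsilon)+\epsilon}$. Since the completed $L$-function has order one, the Phragmén–Lindelöf principle applied in the strip $-\epsilon\le\sigma\le 1+\epsilon$ interpolates these two bounds linearly in $\sigma$, and a direct computation of the interpolating line shows the exponent at a general $\sigma$ equals $(m/2)(1-\sigma)+\epsilon$, which is \eqref{Eq2.20}.

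For \eqref{Eq2.19} I would invoke the approximate functional equation on the line $\Re(s)=\sigma$, expressing $\mathfrak{L}(\sigma+it)$ as $S_1+\chi_{\mathfrak{L}}(\sigma+it)S_2+O(\cdots)$ with $S_1=\sum_{n\le X}a_n n^{-s}$ and $S_2=\sum_{n\le Y}\bar a_n n^{-(1-s)}$, where $XY\asymp(|t|+1)^m$; I would take the balanced choice $X=Y\asymp T^{m/2}$. The crucial point is to apply the mean value theorem of Montgomery–Vaughan in its precise form, $\int_T^{2T}|\sum_{n\le N}b_n n^{-it}|^2\,\ud t= T\sum_{n\le N}|b_n|^2+O(\sum_{n\le N}n|b_n|^2)$, rather than the cruder $(T+N)\sum|b_n|^2$. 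For $S_1$ this yields $\int_T^{2T}|S_1|^2\,\ud t\ll T\sum_{n\le X}|a_n|^2 n^{-2\sigma}+\sum_{n\le X}|a_n|^2 n^{1-2\sigma}\ll T^{1+\epsilon}+X^{2(1-\sigma)+\epsilon}$, and the balanced choice makes $X^{2(1-\sigma)}=T^{m(1-\sigma)}$. For the dual piece I would pull out $\max_{[T,2T]}|\chi_{\mathfrak{L}}(\sigma+it)|^2\asymp T^{m(1-2\sigma)}$ and treat $\int_T^{2T}|S_2|^2\,\ud t$ the same way; after simplification its contribution is again $\ll T^{1+\epsilon}+T^{m(1-\sigma)+\epsilon}$. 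Summing gives $\int_T^{2T}|\mathfrak{L}(\sigma+it)|^2\,\ud t\ll T^{\max\{m(1-\sigma),\,1\}+\epsilon}$, which is \eqref{Eq2.19}.

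The step I expect to be the main obstacle is precisely the use of the sharp form of the mean value theorem: the naive bound $(T+N)\sum|b_n|^2$ delivers the optimal exponent only at $\sigma=\tfrac12$ and is lossy for $\sigma>\tfrac12$ when $m\ge 3$ (it produces $m/(1+2\sigma)$ instead of $m(1-\sigma)$), so the refinement that isolates the off-diagonal term $\sum_{n\le N}n|b_n|^2$ — which is genuinely small for $\sigma$ near $1$ — is exactly what forces the correct exponent. Secondary technical points are the control of the error term in the approximate functional equation, the uniformity of $|\chi_{\mathfrak{L}}(\sigma+it)|\asymp(|t|+1)^{m(\frac12-\sigma)}$ across $[T,2T]$, and verifying the Rankin–Selberg coefficient bound $\sum_{n\le N}|a_n|^2\ll N^{1+\epsilon}$ for the class considered; alternatively one may simply quote the general mean value theory of Perelli.
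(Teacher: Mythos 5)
Your proposal is correct and follows essentially the same route as the paper: the paper disposes of \eqref{Eq2.20} by the Phragm\'en--Lindel\"of (maximum modulus) principle and of \eqref{Eq2.19} by citing Perelli's general mean-value theorem, and your argument is precisely a worked-out reconstruction of those two ingredients (convexity via the functional equation, and the approximate functional equation plus the Montgomery--Vaughan mean value theorem, which is the content of Perelli's result you yourself note one could simply quote). The exponent bookkeeping in your treatment of $S_1$, $S_2$ and the factor $|\chi_{\mathfrak{L}}(\sigma+it)|^2\asymp T^{m(1-2\sigma)}$ checks out and reproduces $T^{\max\{m(1-\sigma),\,1\}+\epsilon}$.
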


\begin{proof}
The result \eqref{Eq2.19} is due to Perelli \cite{19} and \eqref{Eq2.20} follows from Maximum modulus principle.
\end{proof}

\begin{lemma}
Let $f\in H_\kappa$ and $\chi$ be a primitive character modulo $q$. Then for any $\epsilon>0$, we have
\begin{equation}
    L(\sigma+iT,\ {\rm{sym}}^4f\otimes \chi)\ll (qT)^{\max \{\frac{5}{2}(1-\sigma),\ 0\}+\epsilon}\label{Eq2.21}
\end{equation}
uniformly for $-\epsilon\leq \sigma\leq 1+\epsilon$,
\begin{equation}
    \int_1^T\mid L\left(\sigma+it,\ {\rm{sym}}^4f\otimes \chi\right)\mid^{2}\ud t\ll \left(qT\right)^{5(1-\sigma)+\epsilon}\label{Eq2.22}
\end{equation}
uniformly for $\frac{1}{2}\leq \sigma\leq 1+\epsilon$ and $T\geq 1$.
\end{lemma}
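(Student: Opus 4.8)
The plan is to regard $L(s,\mathrm{sym}^4 f\otimes\chi)$ as a genuine degree-$5$ $L$-function in the sense of Perelli \cite{19} and to feed its analytic data into the convexity and mean-value machinery already used for the lower-degree factors in the preceding lemmas. First I would record the basic analytic facts. Since $f\in H_\kappa$ the symmetric fourth power $\mathrm{sym}^4 f$ is cuspidal (Kim), so for a primitive character $\chi$ modulo $q$ the twist $L(s,\mathrm{sym}^4 f\otimes\chi)$ is entire, satisfies a functional equation of degree $5$ with a fixed archimedean gamma factor, and has arithmetic conductor $\asymp q^5$; consequently its analytic conductor at height $T$ is $\mathfrak q(T)\asymp (q(1+|T|))^5$. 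Every subsequent estimate is then just the degree-$5$ specialisation of a standard bound with this conductor made explicit.

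For the pointwise bound \eqref{Eq2.21} I would apply the Phragm\'en--Lindel\"of convexity principle exactly as in \eqref{Eq2.20}. On the line $\sigma=1+\epsilon$ absolute convergence of the Dirichlet series gives $L(\sigma+iT,\mathrm{sym}^4 f\otimes\chi)\ll (qT)^\epsilon$. On the line $\sigma=-\epsilon$ the functional equation, together with Stirling's formula applied to the five gamma factors, converts this into a factor of size $\mathfrak q(T)^{1/2+\epsilon}\ll (qT)^{5/2+\epsilon}$, where the $q$-contribution $q^{5/2}$ comes from the arithmetic conductor $q^5$ raised to the power $1/2$ and the $T$-contribution $T^{5/2}$ from the gamma quotient. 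Interpolating the exponent of $\mathfrak q(T)$ linearly between these two vertical lines yields $L(\sigma+iT,\mathrm{sym}^4 f\otimes\chi)\ll \mathfrak q(T)^{\frac12(1-\sigma)+\epsilon}\ll (qT)^{\frac52(1-\sigma)+\epsilon}$, uniformly for $-\epsilon\le\sigma\le 1+\epsilon$, which is \eqref{Eq2.21}; for $\sigma\ge 1$ the maximum in the exponent is $0$, consistent with absolute convergence.

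For the second-moment bound \eqref{Eq2.22} I would invoke Perelli's mean value theorem \cite{19}, the uniform-in-conductor analogue of \eqref{Eq2.19}. Applied to a degree-$m$ $L$-function of conductor $Q$ it gives $\int_1^T |L(\sigma+it)|^2\,\ud t\ll (Q\,T^m)^{1-\sigma+\epsilon}$ for $\frac12\le\sigma\le 1+\epsilon$; taking $m=5$ and $Q\asymp q^5$ turns the right-hand side into $(q^5 T^5)^{1-\sigma+\epsilon}=(qT)^{5(1-\sigma)+\epsilon}$, which is precisely \eqref{Eq2.22}. The same principle is internally consistent with the earlier estimates: with $m=6$, $Q\asymp q^6$ it reproduces the fourth-moment bound \eqref{Eq2.18} for the degree-$3$ twist (via $|L|^4=|L^2|^2$), and with $m=2$, $Q\asymp q^2$ it reproduces \eqref{Eq2.14}.

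The main obstacle is not the interpolation or the mean-value input themselves --- both are off-the-shelf once the $L$-function is set up --- but the bookkeeping that makes the estimates uniform in $q$. Concretely, I must verify that twisting by a primitive $\chi$ modulo $q$ produces exactly the conductor $q^5$, so that the half-power in the functional equation and the full power in Perelli's theorem emerge as $5/2$ and $5$ respectively, and that $\mathrm{sym}^4 f$ is cuspidal so that $L(s,\mathrm{sym}^4 f\otimes\chi)$ is entire and the convexity argument carries no polar contribution to absorb. Once the conductor is correctly pinned down as $q^5$, both \eqref{Eq2.21} and \eqref{Eq2.22} fall out as the degree-$5$ cases of the general estimates.
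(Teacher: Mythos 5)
Your proposal matches the paper's own (very terse) proof: the paper simply observes that $L(s,\ {\rm{sym}}^4f\otimes\chi)$ is a degree-$5$ $L$-function and cites Perelli \cite{19} for both bounds, which is exactly the convexity-plus-general-mean-value route you spell out, with the conductor $q^5$ made explicit. Your version is just a fleshed-out form of the same argument (the paper leaves the cuspidality, conductor, and interpolation details implicit), so there is nothing to add beyond noting that, strictly speaking, Perelli's second-moment bound carries a $\max\{\cdot,\ 1\}$ in the exponent as in \eqref{Eq2.19}, which both you and the paper drop in \eqref{Eq2.22}.
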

\begin{proof}
One can observe that the $L$-function $L\left(s,\ {\rm{sym}}^4f\otimes \chi\right)$ is of degree 5.
Now the results \eqref{Eq2.21} and \eqref{Eq2.22} follows from Perelli \cite{19}.
\end{proof}

\begin{lemma}
For any $x>1$, we have 
\begin{equation*}
    \sum_{p\leq x}\frac{1}{p}<\log\log x+B+\frac{1}{\log^2x}
\end{equation*} for some $B>0$. 
\end{lemma}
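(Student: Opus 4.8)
The plan is to treat this as the explicit form of Mertens' second theorem and to deduce it by partial summation from Mertens' first theorem. First I would introduce the auxiliary quantity
\begin{equation*}
    A(x):=\sum_{p\leq x}\frac{\log p}{p}
\end{equation*}
and recall that $A(x)=\log x+C+O(1/\log x)$ for some constant $C$ expressible through $\gamma$ and a convergent sum over primes. This asymptotic follows in turn from the Chebyshev-type bound $\theta(x)=x+O(x/\log^{2}x)$, after stripping off the prime-power contributions and applying partial summation to $\theta$.

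Next I would pass from $A(x)$ to the target sum by Abel summation with the weight $1/\log t$, which gives
\begin{equation*}
    \sum_{p\leq x}\frac1p=\frac{A(x)}{\log x}+\int_{2}^{x}\frac{A(t)}{t\log^{2}t}\,\ud t.
\end{equation*}
Substituting $A(t)=\log t+C+O(1/\log t)$, the leading term $\log t$ integrates to $\log\log x-\log\log 2$; the constant $C$ and the boundary term $A(x)/\log x$ together produce a further constant plus contributions of size $O(1/\log x)$ that cancel in pairs; and the remainder $O(1/\log t)$ feeds into $\int_{2}^{x}\ud t/(t\log^{3}t)$, contributing a constant together with a term of size $O(1/\log^{2}x)$. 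Collecting the convergent pieces into a single constant $B$ then yields the shape $\log\log x+B+O(1/\log^{2}x)$; this already suffices for the stated inequality once $B$ is taken large enough, since the factor $1/\log^{2}x$ blows up as $x\to 1^{+}$ and forces the right-hand side to be positive for $x$ near $1$, where the left-hand side vanishes.

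The delicate point, and the one I expect to be the main obstacle, is not this structural computation but the passage from the soft $O(1/\log^{2}x)$ to a sharp one-sided inequality with an explicit constant (the Mertens constant $B=0.261\ldots$). That step cannot be achieved by soft asymptotics alone: it requires the fully quantified estimates of Rosser and Schoenfeld for $\theta(x)$ and for $\sum_{p\leq x}1/p$, tracking every implied constant, followed by direct numerical verification for small $x$ and monotonicity of the remainder for large $x$. In practice I would therefore invoke the Rosser--Schoenfeld bound, which is precisely the asserted inequality.
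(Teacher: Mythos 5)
Your proposal is correct and ultimately lands on the same proof as the paper: the explicit inequality with $B=0.261\ldots$ and error term $1/\log^{2}x$ is exactly (3.20) of Rosser--Schoenfeld, which is all the paper cites. The preliminary partial-summation sketch is a sound account of where the soft asymptotic comes from, and you correctly identify that the quantified one-sided bound cannot be extracted from it without the Rosser--Schoenfeld machinery.
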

\begin{proof}
The inequality holds with $B=0.261497212847643$, see (3.20) of \cite{21}.
\end{proof}

\begin{lemma}
Let $q>100$ be any integer. Then we have
\begin{equation*}
    \frac{q}{\phi(q)}\leq e^{A+1}\log \log q
\end{equation*}
for some effective $A>0$.
\end{lemma}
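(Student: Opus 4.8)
The plan is to reduce the estimate to a sum over primes that is controlled by Lemma 2.10, starting from the Euler product
\[
    \frac{q}{\phi(q)}=\prod_{p\mid q}\left(1-\frac{1}{p}\right)^{-1}.
\]
Since each factor $\left(1-\frac1p\right)^{-1}$ decreases in $p$, for a fixed number $r:=\omega(q)$ of distinct prime divisors the product is largest when these divisors are the first $r$ primes. Writing the primes dividing $q$ as $\ell_1<\cdots<\ell_r$ and the first $r$ primes as $p_1<\cdots<p_r$, we have $\ell_i\geq p_i$, whence termwise
\[
    \frac{q}{\phi(q)}\leq\prod_{i=1}^{r}\left(1-\frac{1}{p_i}\right)^{-1}=\prod_{p\leq p_r}\left(1-\frac{1}{p}\right)^{-1}.
\]

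Next I would take logarithms and use $-\log\left(1-\frac1p\right)=\frac1p+O\!\left(\frac{1}{p^{2}}\right)$, noting that the error terms sum to an absolute constant $C_0$ over all primes. This gives
\[
    \log\frac{q}{\phi(q)}\leq\sum_{p\leq p_r}\frac1p+C_0,
\]
and Lemma 2.10 bounds the right-hand side by $\log\log p_r+B+\frac{1}{\log^2 p_r}+C_0$. Exponentiating and absorbing the bounded quantities into an effective constant yields $\frac{q}{\phi(q)}\ll\log p_r$.

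It remains to bound $p_r$ in terms of $q$. Since $q$ has exactly $r$ distinct prime factors,
\[
    q\geq\prod_{i=1}^{r}p_i=\exp\bigl(\theta(p_r)\bigr),
\]
where $\theta$ denotes the Chebyshev function; the classical lower bound $\theta(x)\gg x$ then gives $p_r\ll\log q$, so that $\log p_r\leq\log\log q+O(1)$. For $q>100$ the quantity $\log\log q$ is bounded away from $0$, so this $O(1)$ can be absorbed to give $\frac{q}{\phi(q)}\ll\log\log q$ with an effective constant, which I would finally write in the stated form $e^{A+1}\log\log q$ after enlarging $A$ if necessary (this also covers the finitely many values of $q$ near $100$).

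I expect the main obstacle to be the prime-counting input in the last step—converting the primorial lower bound $q\geq e^{\theta(p_r)}$ into $p_r\ll\log q$ through an effective Chebyshev estimate—together with the accompanying bookkeeping that collapses the constants $B$, $C_0$ and the Chebyshev constant into a single effective $A$. Every other step is either the monotonicity comparison with the primorial or a direct invocation of Lemma 2.10.
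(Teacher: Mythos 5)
Your argument is correct, but it takes a genuinely different route from the paper's. You pass to the extremal configuration in which the $r=\omega(q)$ prime divisors of $q$ are the first $r$ primes, apply Lemma 2.10 at $x=p_r$, and then convert the primorial inequality $q\geq e^{\theta(p_r)}$ into $p_r\ll\log q$ via an effective Chebyshev lower bound $\theta(x)\gg x$. The paper never leaves the actual set of prime divisors: it bounds $-\log\left(1-\frac1p\right)\leq \frac1p+\frac{1/p^2}{1-1/p}$ so that $\frac{q}{\phi(q)}\leq\exp\left\{\sum_{p\mid q}\frac1p+1\right\}$, and then splits $\sum_{p\mid q}\frac1p$ at $p=\log q$: the primes $p\leq\log q$ are controlled by Lemma 2.10 (contributing $\log\log\log q+O(1)$), while the primes $p>\log q$ contribute at most $\omega(q)/\log q\leq 1/\log 2$ because $2^{\omega(q)}\leq q$; exponentiating then gives $e^{A+1}\log\log q$ directly. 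The paper's split is entirely elementary, requiring only Mertens and the trivial bound on $\omega(q)$, whereas your route imports an effective lower bound for $\theta$ — though that input is standard and in fact available in the same reference \cite{21} cited for Lemma 2.10, so nothing you use is out of reach, and your primorial comparison makes the extremal structure of the bound more transparent. Both approaches deliver the same effective bound $\frac{q}{\phi(q)}\ll\log\log q$, and your final absorption of the additive constants is legitimate since $\log\log q$ is bounded away from $0$ for $q>100$.
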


\begin{proof}
We have 
\begin{align}
    \frac{q}{\phi(q)}=&\prod_{p\mid q}\left( 1-\frac{1}{p}\right)^{-1}\nonumber\\
    =&\exp\left\{-\sum_{p\mid q}\log \left(1-\frac{1}{p}\right) \right\}\nonumber\\
    \leq &\exp\left\{\sum_{p\mid q}\left(\frac{1}{p}+\frac{\frac{1}{p^2}}{1-\frac{1}{p}}\right)\right\}\nonumber\\
    \leq &\exp\left\{\sum_{p\mid q}\frac{1}{p}+1\right\}.\label{Eq2.23}
\end{align}
Now using above lemma, we have
\begin{align*}
    \sum_{p\mid q}\frac{1}{p}=&\sum_{\substack{p\mid q\\p\leq \log q}}\frac{1}{p}+\sum_{\substack{p\mid q\\p> \log q}}\frac{1}{p}\\
    \leq& \log\log\log q+B+\frac{1}{\log^2q}+\frac{1}{\log q}\omega(q)\\
    \leq &\log\log\log q+B+\frac{1}{\log^2q}+\frac{\log q}{(\log 2)\log q}\\
    \leq& \log \log \log q+A
\end{align*}
for some effective $A>0$ and $\omega(q)$ is the number of distinct prime factors of $q$.\\
Hence from \eqref{Eq2.23}, we have 
\begin{align*}
    \frac{q}{\phi(q)}\leq&\exp\big\{\log \log \log q+A+1\big\}\\
    \leq &e^{A+1}\log \log q,
\end{align*}
for some effective $A>0$.\\
From this one can see
\begin{equation}
    \frac{q}{\phi(q)}\ll \log \log q.\label{Eq2.24}
\end{equation}
\end{proof}

\begin{lemma}
Let $k$ denote the $k$-full numbers and $x$ be large. Then we have
\begin{equation}
    \sum_{k\leq x}1 \sim K x^\frac{1}{k}\qquad\textit{ and }\qquad\sum_{k>x}\frac{1}{k}\ll x^{\frac{1}{k}-1}\label{Eq2.25}
\end{equation} 
for some constant $K$ as $x$ tends infinity.
\end{lemma}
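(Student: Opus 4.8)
The plan is to pass to the generating Dirichlet series of the $k$-full numbers, isolate its right-most singularity, and then read off the counting asymptotics by a standard Tauberian/Perron argument, deducing the tail bound afterwards by partial summation. To fix notation (the letter $k$ is overloaded in the statement), let $m$ range over $k$-full integers, write $N_k(x)=\sum_{\substack{m\le x,\ m\ \text{$k$-full}}}1$, and set
\begin{equation*}
F(s)=\sum_{\substack{m\ge 1\\ m\ \text{$k$-full}}}\frac{1}{m^{s}}=\prod_p\left(1+\sum_{j\ge k}\frac{1}{p^{js}}\right)=\prod_p\left(1+\frac{p^{-ks}}{1-p^{-s}}\right),
\end{equation*}
which converges absolutely for $\Re(s)>\tfrac1k$, since a $k$-full number carries every prime in its factorization to an exponent $\ge k$.

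First I would factor out the dominant zeta-factor. Writing $F(s)=\zeta(ks)\,G(s)$ and multiplying the local factor by $1-p^{-ks}$, a short computation gives
\begin{equation*}
G(s)=\prod_p\left(1+\frac{p^{-(k+1)s}-p^{-2ks}}{1-p^{-s}}\right),
\end{equation*}
and the leading $p^{-(k+1)s}$ term shows this Euler product converges absolutely in the half-plane $\Re(s)>\tfrac{1}{k+1}$. Hence $F(s)$ is holomorphic for $\Re(s)>\tfrac{1}{k+1}$ except for a single simple pole at $s=\tfrac1k$, inherited from the pole of $\zeta(ks)$ at $ks=1$, with residue $\tfrac1k\,G\!\left(\tfrac1k\right)$ there.

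The main step is to convert this analytic information into an asymptotic for $N_k(x)$. Applying the Wiener--Ikehara/Delange Tauberian theorem (or, in the spirit of the present paper, Perron's formula followed by shifting the contour past $\Re(s)=\tfrac1k$, using the convergence of $G$ together with standard bounds for $\zeta$ to dispose of the horizontal and residual vertical integrals) yields
\begin{equation*}
N_k(x)=\frac{\tfrac1k\,G(1/k)}{1/k}\,x^{1/k}+o\!\left(x^{1/k}\right)=K\,x^{1/k}+o\!\left(x^{1/k}\right),\qquad K=G\!\left(\tfrac1k\right),
\end{equation*}
which is the first assertion; in particular $N_k(t)\ll t^{1/k}$ for all large $t$. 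The tail estimate then follows by Abel summation: since $\tfrac1k-1<0$ and $\tfrac1k-2<-1$,
\begin{equation*}
\sum_{\substack{m>x\\ m\ \text{$k$-full}}}\frac{1}{m}=\int_{x}^{\infty}\frac{\ud N_k(t)}{t}\ll\frac{N_k(x)}{x}+\int_x^\infty\frac{N_k(t)}{t^{2}}\,\ud t\ll x^{\frac1k-1}+\int_x^\infty t^{\frac1k-2}\,\ud t\ll x^{\frac1k-1},
\end{equation*}
the integral converging by the exponent count. The only genuinely delicate point is the middle step: justifying the passage from the simple pole of $F$ at $s=\tfrac1k$ to the sharp asymptotic, i.e.\ verifying the Tauberian hypotheses (continuity and non-vanishing of the relevant factor on $\Re(s)=\tfrac1k$, equivalently the classical zero-free information for $\zeta$ on $\Re=1$) or else supplying the growth bounds needed to shift the Perron contour. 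This is precisely the content of the classical Bateman--Grosswald count of $k$-full numbers, which may alternatively be cited outright; everything else above is routine.
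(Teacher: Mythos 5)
Your argument is correct, and for the first assertion it does genuinely more than the paper, which simply cites Ivi\'c's book \cite[pp.~33]{14} for the count of $k$-full numbers; your factorization $F(s)=\zeta(ks)G(s)$ with $G$ absolutely convergent for $\Re(s)>\tfrac{1}{k+1}$ is the standard Erd\H{o}s--Szekeres/Bateman--Grosswald derivation, and your local computation is right: at $s=\tfrac1k$ the Euler factor of $G$ collapses to $1+\sum_{m=k+1}^{2k-1}p^{-m/k}$, so your $K=G(1/k)$ is indeed the classical constant. For the tail estimate you do exactly what the paper does (Riemann--Stieltjes integration/Abel summation against $N_k(t)\ll t^{1/k}$), so there is no divergence there. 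Two small remarks. First, your worry about ``non-vanishing \ldots equivalently the classical zero-free information for $\zeta$ on $\Re=1$'' is unnecessary: $\zeta(ks)$ appears as a factor, not a divisor, so $F$ is already meromorphic on $\Re(s)>\tfrac{1}{k+1}$ with a single simple pole at $s=\tfrac1k$, and the Wiener--Ikehara/Delange hypotheses (continuity of $F(s)-\tfrac{1}{k}G(1/k)(s-\tfrac1k)^{-1}$ up to the closed half-plane $\Re(s)\ge\tfrac1k$) hold with no zero-free-region input. Second, if you want to avoid Tauberian machinery and contour shifts altogether, you can read the identity $F(s)=\zeta(ks)G(s)$ as the convolution $\mathbf{1}_{k\text{-full}}=\mathbf{1}_{k\text{-th powers}}*g$ with $\sum_e|g(e)|e^{-\sigma}<\infty$ for $\sigma>\tfrac{1}{k+1}$, whence $N_k(x)=\sum_e g(e)\lfloor(x/e)^{1/k}\rfloor=G(1/k)x^{1/k}+o(x^{1/k})$ by dominated convergence; this elementary route is closest in spirit to what the cited reference actually does.
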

\begin{proof}
For the first approximation see \cite[pp. 33]{14} and the bound in $\displaystyle{\sum_{k>x}\frac{1}{k}}$ follows by Riemann-Steiltjes integration.
\end{proof}

\begin{proof}[Proof of theorem 1]
For any Dirichlet character $\chi$ modulo $q$ and by orthogonality we have 
\begin{align}
    \sum_{\substack{n\leq x+1\\n\equiv 1(q)}}\lambda_f^4(n)=&\frac{1}{\phi(q)}\sum_{\chi(q)}\sum_{n\leq x+1}\lambda_f^4(n)\chi(n)\nonumber\\
    =&\frac{1}{\phi(q)}\sum_{n\leq x+1}\lambda_f^4(n)\chi_0(n)+\frac{1}{\phi(q)}\sum_{{\substack{n\leq x+1,\\\chi\neq \chi_0,\\\textit{$\chi$ is primitive}}}}\lambda_f^4(n)\chi(n)\nonumber\\&\qquad+\frac{1}{\phi(q)}\sum_{{\substack{n\leq x+1,\\\chi\neq \chi_0,\\\textit{$\chi$ is non-primitive}}}}\lambda_f^4(n)\chi(n)\nonumber\\
    :=&\textit{$\sum$}_1+\textit{$\sum$}_2+\textit{$\sum$}_3.\label{Eq3.1}
\end{align}
By using \eqref{Eq2.1} and Perron's formula for $F_f(s,\ \chi_0)$, we have
\begin{align*}
    \sum_{n\leq x+1}\lambda_f^4(n)\chi_0(n)=&\frac{1}{2\pi i}\int_{1+\epsilon-iT}^{1+\epsilon+iT}F_f(s,\ \chi_0)\frac{(x+1)^s}{s}\ \ud s+O\left(\frac{x^{1+\epsilon}}{T}\right)\\
    =&\frac{1}{2\pi i}\int_{1+\epsilon-iT}^{1+\epsilon+iT}\zeta^2(s)\prod_{p\mid q}(1-\frac{1}{p^s})^2L^3(s,\ {\rm{sym}}^2f\otimes\chi)\times\\&\qquad L(s,\ {\rm{sym}}^4f\otimes \chi)U(s)\frac{(x+1)^s}{s}\ \ud s
+O\left(\frac{x^{1+\epsilon}}{T}\right)
\end{align*}
where $1\leq T\leq x$ is a parameter to be chosen later.\\
Now we make the special choice $T=T^*$ of Lemma 2.4 (KR+AS) satisfying \eqref{Eq2.11} and by moving the line of integration to $\Re(s)=\frac{1}{2}+\epsilon$, we have by Cauchy theorem 
\begin{align*}
    \sum_{n\leq x+1}\lambda_f^4(n)\chi_0(n)\\=&x\log x\prod_{p\mid q}(1-\frac{1}{p})^2L^3(1,\ {\rm{sym}}^2f\otimes\chi_0)L(1,\ {\rm{sym}}^4f\otimes \chi_0)U(1)\\&-\frac{1}{2\pi i}\int_{C_1}F_f(s,\ \chi_0)\frac{(x+1)^s}{s}\ \ud s+O\left(\frac{x^{1+\epsilon}}{T}\right)
\end{align*}
where the main term is coming from the pole at $s=1$ of order 2 of $\zeta(s)$ and $C_1$ is the curve connecting by the points $1+\epsilon+iT,\ \frac{1}{2}+\epsilon+iT,\ \frac{1}{2}+\epsilon-iT$ and $1+\epsilon-iT$ with straight line segments.\\
Now by Cauchy-Schwarz inequality, we have
\begin{align}
    J_1:=&\int_{\frac{1}{2}+\epsilon-iT}^{\frac{1}{2}+\epsilon+iT}F_f(s,\ \chi_0)\frac{(x+1)^s}{s}\ \ud s\nonumber\\
    =&\int_{\frac{1}{2}+\epsilon-iT}^{\frac{1}{2}+\epsilon+iT}\zeta^2(s)\prod_{p\mid q}\left(1-\frac{1}{p^s}\right)^2\times\nonumber\\
    &L^3(s,\ {\rm{sym}^2}f\otimes \chi_0)L(s,\ {\rm{sym}}^4f\otimes \chi_0)U(s)\frac{(x+1)^s}{s}\ \ud s\nonumber\\
    \ll& \int_{-T}^{T} \mid\zeta^2(1/2+\epsilon+it)L^3(1/2+\epsilon+it,\ {\rm{sym}}^2f)\times\nonumber\\
    &\qquad L(1/2+\epsilon+it,\ {\rm{sym}}^4f)\mid\frac{x^{1/2+\epsilon}}{\mid 1/2+\epsilon+it\mid}\ \ud t\nonumber\\
    \ll& x^{1/2+\epsilon}+\sup_{1\leq T_1\leq T}x^{1/2+\epsilon} T_1^{-1}I_1^{1/2}I_2^{1/2},\label{Eq3.2}
\end{align}
where 
\begin{align*}
    I_1=&\int_{T_1}^{2T_1}\mid\zeta^2(1/2+\epsilon+it)\mid^2\ \ud t,\\
    I_2=&\int_{T_1}^{2T_1}\mid L^3(1/2+\epsilon+it,\ {\rm{sym}^2}f)L(1/2+\epsilon+it,\ {\rm{sym}}^4f)\mid^2\ \ud t.
\end{align*}
By \eqref{Eq2.12}, \eqref{Eq2.16} and \eqref{Eq2.19}, we have
\begin{align*}
    I_1\ll& T_1(\log T_1)^4\\
    I_2\ll & \left(\max_{T_1\leq t\leq 2T_1}\mid L^3(\frac{1}{2}+\epsilon+it,\ {\rm{sym}}^2f)\mid^2\right)\times\\
    &\qquad\left(\int_{T_1}^{2T_1}\mid L(\frac{1}{2}+\epsilon+it,\ {\rm{sym}}^4f)\mid^2\ \ud t\right)\\
    \ll& T_1^{6\times \frac{27}{20}\times\frac{1}{2}+\epsilon+\frac{5}{2}+\epsilon}\\
    \ll& T_1^{\frac{131}
{20}+2\epsilon}.
\end{align*}
By \eqref{Eq3.2}, we have 
\begin{align*}
    J_1=&\int_{\frac{1}{2}+\epsilon-iT}^{\frac{1}{2}+\epsilon+iT}F_f(s,\ \chi_0)\frac{(x+1)^s}{s}\ \ud s\\
    \ll& x^{\frac{1}{2}+\epsilon}(T(\log T)^4)^\frac{1}{2} T^{\frac{131}{20}\times\frac{1}{2}+\epsilon-1}\\
    \ll & x^{\frac{1}{2}+\epsilon}T^{\frac{111}{40}+3\epsilon}.
\end{align*}
Now for horizontal portions by \eqref{Eq2.11}, \eqref{Eq2.16} and \eqref{Eq2.20}, we have
\begin{align*}
    J_2:=&\int_{\frac{1}{2}+\epsilon-iT}^{1+\epsilon+iT}F_f(s,\ \chi_0)\frac{(x+1)^s}{s}\ \ud s\\
    \ll& \int_{\frac{1}{2}+\epsilon}^{1+\epsilon}T^{2\epsilon+3\times \frac{27}{20}(1-\sigma)+\frac{5}{2}(1-\sigma)+\epsilon}\frac{x^\sigma}{T}\ \ud \sigma\\
    \ll& T^{\frac{111}{20}+2\epsilon}\int_{\frac{1}{2}+\epsilon}^{1+\epsilon}\left(\frac{x}{T^{\frac{131}{20}}}\right)^\sigma\ \ud \sigma\\
    \ll& \max_{\frac{1}{2}+\epsilon\leq \sigma\leq 1+\epsilon}\left(\frac{x}{T^{\frac{131}{20}}}\right)^\sigma T^{\frac{111}{20}+2\epsilon}\\
    \ll& \frac{x^{1+\epsilon}}{T}+x^{\frac{1}{2}+\epsilon}T^{\frac{91}{40}+10\epsilon}.
\end{align*}
Hence 
\begin{align}
    \sum_{n\leq x+1}\lambda_f^4(n)\chi_0(n)\nonumber\\=&x\log x\prod_{p\mid q}\left(1-\frac{1}{p}\right)^2L^3(1,\ {\rm{sym}}^2f\otimes\chi_0)\times \nonumber\\&L(1,\ {\rm{sym}}^4f\otimes \chi_0)U(1)+O\left(\frac{x^{1+\epsilon}}{T}\right)+O\left(x^{\frac{1}{2}
     +\epsilon}T^{\frac{111}{40}+\epsilon}\right).\label{Eq3.3}
    \end{align}
Now for $\sum_2$, using Perron's formula we have
\begin{equation*}
    \sum_{{\substack{n\leq x+1\\\chi\neq \chi_0\\\textit{$\chi$ is primitive}}}}\lambda_f^4(n)\chi(n)=\frac{1}{2\pi i} \int_{1+\epsilon-iT}^{1+\epsilon+iT}F_f(s,\ \chi)\frac{(x+1)^s}{s}\ \ud s+O\left(\frac{x^{1+\epsilon}}{T}\right),
\end{equation*}
where $1\leq T\leq x$ is a parameter to be chosen later.\\
By moving the line of integration to $\Re(s)=\frac{1}{2}+\epsilon$, we have by Cauchy's theorem
\begin{equation*}
    \sum_{{\substack{n\leq x+1,\\\chi\neq \chi_0,\\\textit{$\chi$ is primitive}}}}\lambda_f^4(n)\chi(n)=-\frac{1}{2\pi i} \int_{C_2}F_f(s,\ \chi)\frac{(x+1)^s}{s}\ \ud s+O\left(\frac{x^{1+\epsilon}}{T}\right)
\end{equation*}
where $C_2$ is the curve joining the points $1+\epsilon+iT,\ \frac{1}{2}+\epsilon+iT,\ \frac{1}{2}+\epsilon-iT$ and $1+\epsilon-iT$ through straight line segments.
For vertical portions using \eqref{Eq2.1} and Cauchy-Schwarz inequality, we have  
\begin{align}
    J_3:=&\int_{1/2+\epsilon-iT}^{1/2+\epsilon+iT}F_f(s,\ \chi)\frac{(x+1)^s}{s}\ \ud s\nonumber\\
    =&\int_{1/2+\epsilon-iT}^{1/2+\epsilon+iT}L^2(s,\ \chi)L^3(s,\ {\rm{sym}^2}f\otimes \chi)\times\nonumber\\
    &\qquad L(s,\ {\rm{sym}}^4f\otimes \chi)U(s)\frac{(x+1)^s}{s}\ \ud s\nonumber\\
    \ll& \int_{-T}^T \mid L^2(\frac{1}{2}+\epsilon+it,\ \chi)L^3(\frac{1}{2}+\epsilon+it,\ {\rm{sym}}^2f\otimes\chi)\times\nonumber\\
    &\qquad L(\frac{1}{2}+\epsilon+it,\ {\rm{sym}}^4f\otimes\chi)\mid \frac{x^{\frac{1}{2}+\epsilon}}{\mid 1/2+\epsilon+it\mid}\ \ud t\nonumber\\
    \ll& x^{\frac{1}{2}+\epsilon}+\sup_{1\leq T_1\leq T}x^{\frac{1}{2}+\epsilon} T_1^{-1}I_1^\frac{1}{2}I_2^\frac{1}{2},\label{Eq3.4}
\end{align}
where
\begin{align*}
    I_1=&\int_{T_1}^{2T_1}\mid L^2(1/2+\epsilon+it,\ \chi)\mid^2\ \ud t,\\
    I_2=&\int_{T_1}^{2T_1}\mid L^3(1/2+\epsilon+it,\ {\rm{sym}^2}f\otimes \chi)L(1/2+\epsilon+it,\ {\rm{sym}}^4f\otimes \chi)\mid^2\ \ud t.
\end{align*}
By \eqref{Eq2.14}, \eqref{Eq2.17} and \eqref{Eq2.22}, we have
\begin{align*}
    I_1\ll& (qT_1)^{2(1-\frac{1}{2})+\epsilon}\ll(qT_1)^{1+\epsilon},\\
    I_2\ll& \left(\max_{T_1\leq t\leq 2T_1}\mid L^3(1/2+\epsilon+it,\ {\rm{sym}}^2f\otimes \chi)\mid^2\right)\times\\&\qquad\left(\int_{T_1}^{2T_1}\mid  L(1/2+\epsilon+it,\ {\rm{sym}}^4f\otimes \chi)\mid^2\ \ud t\right)\\
    \ll&(qT_1)^{6\times \frac{67}{46}\times \frac{1}{2}+\epsilon}(qT_1)^{\frac{5}{2}+\epsilon}\\
    \ll&(qT_1)^{\frac{316}{46}+\epsilon}.
\end{align*}
By \eqref{Eq3.4}, we have
\begin{align*}
    J_3=&\int_{1/2+\epsilon-iT}^{1/2+\epsilon+iT}F_f(s,\ \chi)\frac{(x+1)^s}{s}\ \ud s\\
    \ll&x^{\frac{1}{2}+\epsilon}(qT)^{\frac{1}{2}+\frac{316}{92}+\epsilon}T^{-1}\\
    \ll&x^{\frac{1}{2}+\epsilon} q^{\frac{362}{92}+\epsilon}T^{\frac{270}{92}+\epsilon}.
\end{align*}
Now for horizontal portions, we have by \eqref{Eq2.13}, \eqref{Eq2.17} and \eqref{Eq2.21}
\begin{align*}
    J_4:=&\int_{1/2+\epsilon-iT}^{1+\epsilon+iT}F_f(s,\ \chi)\frac{(x+1)^s}{s}\ \ud s\\
    \ll& \int_{1/2+\epsilon}^{1+\epsilon}(qT)^{\frac{1}{3}\times 2(1-\sigma)+\frac{67}{46}(1-\sigma)\times 3+\frac{5}{2}(1-\sigma)}\frac{x^\sigma}{T}\ \ud\sigma\\
    \ll&\frac{(qT)^\frac{1040}{138}}{T}\int_{1/2+\epsilon}^{1+\epsilon}\left(\frac{x}{(qT)^{\frac{1040}{138}}}\right)^\sigma \ \ud\sigma\\
    \ll&\max_{\frac{1}{2}+\epsilon\leq \sigma\leq 1+\epsilon}\frac{(qT)^\frac{1040}{138}}{T}\left(\frac{x}{(qT)^\frac{1040}{138}}\right)^\sigma\\
    \ll&\frac{x^{1+\epsilon}}{T}+x^{\frac{1}{2}+\epsilon}q^{\frac{1040}{276}+\epsilon}T^{\frac{764}{276}+\epsilon}.
\end{align*}
Hence 
\begin{equation}
    \sum_{{\substack{n\leq x+1\\\chi\neq \chi_0\\\textit{$\chi$ is primitive}}}}\lambda_f^4(n)\chi(n)\ll \frac{x^{1+\epsilon}}{T}+x^{\frac{1}{2}+\epsilon}q^{\frac{362}{92}+\epsilon}T^{\frac{270}{92}+\epsilon}.\label{Eq3.5}
\end{equation}
For $\sum_3$, when $\chi$ is non-primitive character modulo $q$ there exists a conductor $q_1$ the smallest divisor  of $q$ such that $\chi=\chi_0\chi^*$, where $\chi^*$ is primitive character modulo $q_1$ and $\chi_0$ is principal character modulo $q$. By lemma 2.3 and by the same kind of arguments as in $\sum_2$, we have
\begin{equation}
    \sum_{{\substack{n\leq x+1\\\chi\neq \chi_0\\\textit{$\chi$ is non- primitive}}}}\lambda_f^4(n)\chi(n)\ll \frac{x^{1+\epsilon}}{T}+x^{\frac{1}{2}+\epsilon}q_1^{\frac{362}{92}+\epsilon}T^{\frac{270}{92}+\epsilon}.\label{Eq3.6}
\end{equation}
Now by writing  $L^3(1,\ {\rm{sym}}^2f\otimes \chi_0)L(1,\ {\rm{sym}}^4f\otimes \chi_0)U(1)=c_1$, from \eqref{Eq3.1}, \eqref{Eq3.3}, \eqref{Eq3.5} and \eqref{Eq3.6}, we have 
\begin{equation*}
    \sum_{n\leq x+1}\lambda_f^4(n)\chi(n)=c_1x\log x\frac{\phi(q)}{q^2}+ O\left(\frac{x^{1+\epsilon}}{T\phi(q)}\right)+O\left(\frac{x^{\frac{1}{2}+\epsilon}q^{\frac{362}{92}+\epsilon}T^{\frac{270}{92}+\epsilon}}{\phi(q)}\right).
\end{equation*}
Choosing $T=\frac{x^{\frac{23}{181}}}{q}$, we have
\begin{equation*}
    \sum_{n\leq x+1}\lambda_f^4(n)\chi(n)=c_1x\log x\frac{\phi(q)}{q^2}+ O\left(\frac{x^{\frac{158}{181}+\epsilon}q^{1+\epsilon}}{\phi(q)}\right),
\end{equation*}
for $q\ll x^{\frac{23}{181}-\epsilon}$, note that $\frac{x^{\frac{158}{181}}q}{\phi(q)}=o\left(x\log x\frac{\phi(q)}{q^2}\right)$ by lemma 2.11 for exceptionally large $x$, so that the main term dominates.\\
This proves theorem 1.
\end{proof}

\begin{proof}[Proof of theorem 2]
Observe that when $q$ is a prime, $\chi_0$ is the principle character modulo $q$ and rest of them are primitive and non-principle and there are $\phi(q)$ in characters.

Now for any Dirichlet character modulo $q$, where $q$ is prime, by orthogonality and by above observation, we have
\begin{align}
    \sum_{\substack{n\leq x+1\\n\equiv 1(q)}}\lambda_f^4(n)=&\frac{1}{\phi(q)}\sum_{\chi(q)}\sum_{n\leq x+1}\lambda_f^4(n)\chi(n)\nonumber\\
    =&\frac{1}{\phi(q)}\sum_{n\leq x+1}\lambda_f^4(n)\chi_0(n)+\frac{1}{\phi(q)}\sum_{{\substack{n\leq x+1,\\\chi\neq \chi_0}}}\lambda_f^4(n)\chi(n)\nonumber\\
    :=&\textit{$\sum$}_1+\textit{$\sum$}_2.\label{Eq4.1}
\end{align}
Now for $\sum_1$ and $\sum_2$ by the same arguments in the proof of theorem 1, we have
\begin{align}
    \sum_{n\leq x+1}\lambda_f^4(n)\chi_0(n)\nonumber\\=&x\log x\prod_{p\mid q}\left(1-\frac{1}{p}\right)^2L^3(1,\ {\rm{sym}}^2f\otimes\chi_0)\times \nonumber\\&L(1,\ {\rm{sym}}^4f\otimes \chi_0)U(1)+O\left(\frac{x^{1+\epsilon}}{T}\right)+O\left(x^{\frac{1}{2}
     +\epsilon}T^{\frac{111}{40}+\epsilon}\right)\label{Eq4.2}
\end{align}
and
\begin{equation*}
    \sum_{{\substack{n\leq x+1,\\\chi\neq \chi_0,\\\textit{$\chi$ is primitive}}}}\lambda_f^4(n)\chi(n)=-\frac{1}{2\pi i} \int_{C_2}F_f(s,\ \chi)\frac{(x+1)^s}{s}\ \ud s+O\left(\frac{x^{1+\epsilon}}{T}\right)
\end{equation*}
where $C_2$ is the curve joining the points $1+\epsilon+iT,\ \frac{1}{2}+\epsilon+iT,\ \frac{1}{2}+\epsilon-iT$ and $1+\epsilon-iT$ through straight line segments, $1\leq T\leq x$ is a parameter to be chosen later.\\
Now for vertical portion using \eqref{Eq2.1} and Cauchy-Schwarz inequality, we have  
\begin{align}
    J_5:=&\int_{1/2+\epsilon-iT}^{1/2+\epsilon+iT}F_f(s,\ \chi)\frac{(x+1)^s}{s}\ \ud s\nonumber\\
    =&\int_{1/2+\epsilon-iT}^{1/2+\epsilon+iT}L^2(s,\ \chi)L^3(s,\ {\rm{sym}^2}f\otimes \chi)\times\nonumber\\
    &\qquad L(s,\ {\rm{sym}}^4f\otimes \chi)U(s)\frac{(x+1)^s}{s}\ \ud s\nonumber\\
    \ll& \int_{-T}^T \mid L^2(\frac{1}{2}+\epsilon+it,\ \chi)L^3(\frac{1}{2}+\epsilon+it,\ {\rm{sym}}^2f\otimes\chi)\times\nonumber\\
    &\qquad L(\frac{1}{2}+\epsilon+it,\ {\rm{sym}}^4f\otimes\chi)\mid \frac{x^{\frac{1}{2}+\epsilon}}{\mid 1/2+\epsilon+it\mid}\ \ud t\nonumber\\
    \ll& x^{\frac{1}{2}+\epsilon}+\sup_{1\leq T_1\leq T}x^{\frac{1}{2}+\epsilon} T_1^{-1}I_1^\frac{1}{6}I_2^\frac{3}{4}I_3^\frac{1}{12},\label{Eq4.3}
\end{align}
where
\begin{align*}
    I_1=&\int_{T_1}^{2T_1}\mid L(1/2+\epsilon+it,\ \chi)\mid^{12}\ \ud t,\\
    I_2=&\int_{T_1}^{2T_1}\mid L(1/2+\epsilon+it,\ {\rm{sym}^2}f\otimes \chi)\mid^4\ \ud t,\\
    I_3=&\int_{T_1}^{2T_1}\mid L(1/2+\epsilon+it,\ {\rm{sym}}^4f\otimes \chi)\mid^{12} \ud t.
\end{align*}
For $I_1$ and $I_2$ using \eqref{Eq2.15} and \eqref{Eq2.18} respectively, and for $I_3$ using a similar kind of bound as in \eqref{Eq2.22}, we have
\begin{align*}
    I_1&\ll q^{4(1-\frac{1}{2}+\epsilon)}T_1^{3-2(1-\frac{1}{2}+\epsilon)+\epsilon}\ll q^{2+4\epsilon}T_1^{2+\epsilon}\\
    I_2&\ll (qT_1)^{6(1-\frac{1}{2}+\epsilon)}\ll (qT_1)^{3+6\epsilon}\\
    I_3&\ll (qT_1)^{30(1-\frac{1}{2}+\epsilon)}\ll(qT_1)^{15+30\epsilon}.
\end{align*}
From \eqref{Eq4.3}, we have
\begin{align}
    \int_{1/2+\epsilon-iT}^{1/2+\epsilon+iT}F_f(s,\ \chi)\frac{(x+1)^s}{s}\ \ud s&\ll x^{\frac{1}{2}+\epsilon}q^{\frac{1}{3}+\epsilon}T^{\frac{1}{3}-1+\epsilon}\left(qT\right)^{\frac{9}{4}+\frac{15}{12}+2\epsilon}\nonumber\\
    &\ll x^{\frac{1}{2}+\epsilon}q^{\frac{23}{6}+\epsilon}T^{\frac{17}{6}+\epsilon}.\label{Eq4.4}
\end{align}
Now for horizontal portions from theorem 1, we have
\begin{align}
    \int_{1/2+\epsilon-iT}^{1+\epsilon+iT}F_f(s,\ \chi)\frac{(x+1)^s}{s}\ \ud s&\ll \frac{x^{1+\epsilon}}{T}+x^{\frac{1}{2}+\epsilon}q^{\frac{1040}{276}+\epsilon}T^{\frac{764}{276}+\epsilon}.\label{Eq4.5}
\end{align}
From \eqref{Eq4.1}, \eqref{Eq4.4} and \eqref{Eq4.5} we have
\begin{equation*}
    \sum_{n\leq x+1}\lambda_f^4(n)\chi(n)=c_1x\log x\frac{\phi(q)}{q^2}+ O\left(\frac{x^{1+\epsilon}}{T\phi(q)}\right)+O\left(\frac{x^{\frac{1}{2}+\epsilon}q^{\frac{23}{6}+\epsilon}T^{\frac{17}{6}+\epsilon}}{\phi(q)}\right).
\end{equation*}
Choosing $T=\frac{x^{\frac{3}{23}}}{q}$, we have
\begin{equation*}
    \sum_{n\leq x+1}\lambda_f^4(n)\chi(n)=c_1x\log x\frac{\phi(q)}{q^2}+ O\left(\frac{x^{\frac{20}{23}+\epsilon}q^{1+\epsilon}}{\phi(q)}\right),
\end{equation*}
for $q\ll x^{\frac{3}{23}-\epsilon}$, note that $\frac{x^{\frac{20}{23}}q}{\phi(q)}=o\left(x\log x\frac{\phi(q)}{q^2}\right)$ by lemma 2.11 for exceptionally large $x$, so that the main term dominates.\\
This proves theorem 2.
\end{proof}

\begin{remark}
For general $q\geq 100$, in theorem 1, uniformity in $q$ gives upto $q\ll x^{\frac{23}{181}-\epsilon}$. However when $q\geq 100$ and $q$ is a prime integer, theorem 2 allows us the prime $q$-uniformity up to $q\ll x^{\frac{3}{23}-\epsilon}$ which is comparatively of larger range. We note that $\frac{3}{23}>\frac{23}{181}$.
\end{remark}

\begin{proof}[Proof of theorem 3]
By the definition of $k$-full kernel function $a(n)$, when we decompose $n\geq 1$ uniquely as $n=q(n)k(n),\ (k(n),\ q(n))=1$ where $k(n)$ is $k$-full and $q(n)$ is $k$-free, we have 
\begin{align*}
    a(n)=&a(k(n))\ll n^\epsilon,\ \qquad\lambda_f^4(n)\ll n^\epsilon.
\end{align*}
Let $1\leq H\leq x^{\frac{23}{543}}$ is a parameter to be chosen later. By \eqref{Eq2.25}, we have
\begin{align}
    \sum_{n\leq x}a(n)\lambda_f^4(n+1)=&\sum_{\substack{n\leq x\\k(n)\leq H}}a(n)\lambda_f^4(n+1)+\sum_{\substack{n\leq x\\k(n)> H}}a(n)\lambda_f^4(n+1)\nonumber\\
    =&\sum_{\substack{n\leq x\\k(n)\leq H}}a(n)\lambda_f^4(n+1)\nonumber\\
    &+O\left(\sum_{H<k(n)\leq x}a(k(n))\sum_{\substack{q(n)\leq \frac{x}{k(n)}\\(q(n),\ k(n))=1}}\lambda_f^4(k(n)q(n)+1)\right)\nonumber\\
    =&\sum_{\substack{n\leq x\\k(n)\leq H}}a(n)\lambda_f^4(n+1)\nonumber\\
&+O\left(\sum_{H<k(n)\leq x}k(n)^{2\epsilon}\sum_{\substack{q(n)\leq \frac{x}{k(n)}\\(q(n),\ k(n))=1}}q(n)^\epsilon\right)\nonumber\\
=&\sum_{\substack{n\leq x\\k(n)\leq H}}a(n)\lambda_f^4(n+1)+O\left(x^{1+\epsilon}\sum_{H<k(n)\leq x}\frac{1}{k(n)}\right)\nonumber\\
=&\sum_{\substack{n\leq x\\k(n)\leq H}}a(n)\lambda_f^4(n+1)+O\left(x^{1+\epsilon}H^{\frac{1}{k}-1}\right).\label{Eq5.1}
\end{align}
Now, if we define $\displaystyle{g(l)=\sum_{md^k=l}\mu(d)}$ then notice that $g(q(n))=1$. We have
\begin{align}
    \sum_{\substack{n\leq x\\k(n)\leq H}}a(n)\lambda_f^4(n+1)=&\sum_{k(n)\leq H}a(k(n))\sum_{\substack{q(n)\leq \frac{x}{k(n)}\\(q(n),\ k(n))=1}}\lambda_f^4\left(k(n)q(n)+1\right)\nonumber\\
    =&\sum_{k(n)\leq H}a(k(n))\sum_{\substack{q(n)\leq \frac{x}{k(n)}\\(q(n),\ k(n))=1}}g\left(q(n)\right)\lambda_f^4\left(k(n)q(n)+1\right)\nonumber\\
    =&\sum_{k(n)\leq H}a(k(n))\sum_{\substack{q(n)\leq \frac{x}{k(n)}\\(q(n),\ k(n))=1}}\left(\sum_{m(n)d^k(n)=q(n)}\mu(d(n))\right)\times\nonumber\\&\qquad\qquad
    \lambda_f^4\left(k(n)m(n)d^k(n)+1\right)\nonumber\\
    =&\sum_{k(n)\leq H}a(k(n))\sum_{\substack{d(n)\leq\left( \frac{x}{k(n)}\right)^{\frac{1}{k}}\\(d(n),\ k(n))=1}}\mu(d(n))\times\nonumber\\&\qquad\qquad\sum_{\substack{m(n)\leq \frac{x}{k(n)d^k(n)}\\(m(n),\ k(n))=1}}\lambda_f^4\left(k(n)m(n)d^k(n)+1\right)\nonumber\\
    :=&\textit{$\sum$}^*_1+\textit{$\sum$}^*_2,\label{Eq5.2}
\end{align}
where 
\begin{align*}
    \textit{$\sum$}^*_1=&\sum_{k(n)\leq H}a(k(n))\sum_{\substack{d(n)\leq H^{\frac{1}{k}}\\(d(n),\ k(n))=1}}\mu(d(n))\times\\
    &\qquad\qquad\sum_{\substack{m(n)\leq \frac{x}{k(n)d^k(n)}\\(m(n),\ k(n))=1}}\lambda_f^4\left(k(n)m(n)d^k(n)+1\right)\\
    \textit{$\sum$}^*_2=&\sum_{k(n)\leq H}a(k(n))\sum_{\substack{H^{\frac{1}{k}}<d(n)\leq \big( \frac{x}{k(n)}\big)^{\frac{1}{k}}\\(d(n),\ k(n))=1}}\mu(d(n))\times\\&\qquad\qquad\sum_{\substack{m(n)\leq \frac{x}{k(n)d^k(n)}\\(m(n),\ k(n))=1}}\lambda_f^4\left(k(n)m(n)d^k(n)+1\right).
\end{align*}
Now for $\sum_2^*$, we have
\begin{align*}
    \textit{$\sum$}_2^*\ll& x^{1+\epsilon}\sum_{k(n)\leq H}\frac{1}{k(n)}\sum_{d(n)\geq H^\frac{1}{k}}\frac{1}{d^k(n)}\\
    \ll&x^{1+\epsilon}H^{\frac{1}{k}-1}.
\end{align*}
Now for $\sum_1^*$ by using $\displaystyle{\sum_{\delta(n)/(m(n),\ k(n))}}\mu(\delta(n))=1$, we have
\begin{align*}
    \textit{$\sum$}^*_1=&\sum_{k(n)\leq H}a(k(n))\sum_{\substack{d(n)\leq H^{\frac{1}{k}}\\(d(n),\ k(n))=1}}\mu(d(n))\sum_{\delta(n)/k(n)}\mu(\delta(n))\times\\&\qquad\qquad\sum_{m_1(n)\delta(n)k(n)d^k(n)\leq x}\lambda_f^4\left(m_1(n)\delta(n)k(n)d^k(n)+1\right).
\end{align*}
Since we can write 
\begin{align*}
    \sum_{m_1(n)\delta(n)k(n)d^k(n)\leq x}\lambda_f^4\left(m_1(n)\delta(n)k(n)d^k(n)+1\right)\\=\sum_{\substack{n\leq x\\n\equiv1\ \left(\delta(n)k(n)d^k(n)\right)}}\lambda_f^4\left(n+1\right),
\end{align*}
by theorem 1, we have $\sum_1^*=\sum_1^{'}+\sum_1^{''}$,
where 
\begin{align*}
     \textit{$\sum$}_{1}^{'}=&\sum_{k(n)\leq H}a(k(n))\sum_{\substack{d(n)\leq H^{\frac{1}{k}}\\(d(n),\ k(n))=1}}\mu(d(n))\sum_{\delta(n)/k(n)}\mu(\delta(n))\times\\&\qquad\qquad c_1x\log x\frac{\phi\left(\delta(n)k(n)d^k(n)\right)}{\left(\delta(n)k(n)d^k(n)\right)^2}
\end{align*}
and 
\begin{align*}
     \textit{$\sum$}_{1}^{''}=&O\Bigg(\sum_{k(n)\leq H}a(k(n))\sum_{\substack{d(n)\leq H^{\frac{1}{k}}\\(d(n),\ k(n))=1}}\mu(d(n))\sum_{\delta(n)/k(n)}\mu(\delta(n))\times\\&\qquad\qquad x^{\frac{158}{181}+\epsilon} \frac{\left(\delta(n)k(n)d^k(n)\right)^{1+\epsilon}}{\phi\left(\delta(n)k(n)d^k(n)\right)}\Bigg).
\end{align*}
For $\sum_1^{''}$ by \eqref{Eq2.24}, we have
\begin{align}
   \textit{$\sum$}_{1}^{''}=&O\bigg(\sum_{k(n)\leq H}a(k(n))\sum_{\substack{d(n)\leq H^{\frac{1}{k}}\\(d(n),\ k(n))=1}}\mu(d(n))\sum_{\delta(n)/k(n)}\mu(\delta(n))\times\nonumber\\
   &\qquad\qquad x^{\frac{158}{181}+\epsilon} \frac{\left(\delta(n)k(n)\right)^{1+\epsilon}}{\phi\left(\delta(n)k(n)\right)}\frac{\left
   ( d^k(n)\right)^{1+\epsilon}}{\phi\left(d^k(n)\right)}\Bigg)\nonumber\\
   \ll&x^{\frac{158}{181}+2\epsilon}\sum_{k(n)\leq H}\sum_{\substack{d(n)\leq H^{\frac{1}{k}}\\(d(n),\ k(n))=1}}\sum_{\delta(n)/k(n)}\log \log \left(\delta(n)k(n)\right)\log \log \left(d^k(n)\right)\nonumber\\
   \ll&x^{\frac{158}{181}+2\epsilon} \log \log H^2\sum_{k(n)\leq H}\sum_{\substack{d(n)\leq H^{\frac{1}{k}}\\(d(n),\ k(n))=1}}\sum_{\delta(n)/k(n)}1\nonumber\\
   \ll& x^{\frac{158}{181}+10\epsilon} H^\frac{1}{k}H^\frac{1}{k}\nonumber\\
   \ll & x^{\frac{158}{181}+\epsilon} H^{\frac{2}{k}}.\label{Eq5.3}
\end{align}
Now for $\sum_1^{'}$, by \eqref{Eq2.25} we have
\begin{align}
     \textit{$\sum$}_1^{'}=&\sum_{k(n)\leq H}a(k(n))\sum_{\substack{d(n)\leq H^{\frac{1}{k}}\\(d(n),\ k(n))=1}}\mu(d(n))\sum_{\delta(n)/k(n)}\mu(\delta(n))\times\nonumber\\&\qquad\qquad c_1x\log x\frac{\phi\left(\delta(n)k(n)d^k(n)\right)}{\left(\delta(n)k(n)d^k(n)\right)^2}\nonumber\\
     =&c_1x\log x\sum_{k(n)\leq H}\frac{a(k(n))}{k(n)}\sum_{\substack{d(n)\leq H^{\frac{1}{k}}\\(d(n),\ k(n))=1}}\frac{\mu(d(n))}{d(n)}\sum_{\delta(n)/k(n)}\frac{\mu(\delta(n))}{\delta(n)}\times\nonumber\\&\qquad\qquad  \frac{\phi\left(\delta(n)k(n)d^k(n)\right)}{\delta(n)k(n)d^k(n)}\nonumber\\
     =&c_1x\log x\sum_{k(n)=1}^\infty\frac{a(k(n))}{k(n)}\sum_{\delta(n)/k(n)}\frac{\mu(\delta(n))}{\delta(n)}\sum_{\substack{d(n)\leq H^{\frac{1}{k}}\\(d(n),\ k(n))=1}}\frac{\mu(d(n))}{d(n)}\times \nonumber\\
     &\qquad\qquad \frac{\phi\left(\delta(n)k(n)d^k(n)\right)}{\delta(n)k(n)d^k(n)}\nonumber\\
     &+O\left(c_1x\log x\sum_{k(n)>H}\frac{a(k(n))}{k(n)}\sum_{\delta(n)/k(n)}\frac{\mu(\delta(n))}{\delta(n)}\sum_{\substack{d(n)\leq H^\frac{1}{k}\\(d(n),\ k(n))=1}}\frac{\mu(d(n))}{d(n)}\right)\nonumber\\
     =&c_1x\log x\sum_{k(n)=1}^\infty\frac{a(k(n))}{k(n)}\sum_{\delta(n)/k(n)}\frac{\mu(\delta(n))}{\delta(n)}\sum_{\substack{d(n)=1\\(d(n),\ k(n))=1}}^\infty\frac{\mu(d(n))}{d(n)}\times \nonumber\\&\qquad\qquad \frac{\phi\left(\delta(n)k(n)d^k(n)\right)}{\delta(n)k(n)d^k(n)}\nonumber\\
     &+O\left(c_1x\log x\sum_{k(n)=1}^\infty\frac{a(k(n))}{k(n)}\sum_{\delta(n)/k(n)}\frac{\mu(\delta(n))}{\delta(n)}\sum_{d(n)>H^\frac{1}{k}}\frac{\mu(d(n))}{d(n)}\right)\nonumber\\
     &+O\left(c_1x\log x\sum_{k(n)>H}\frac{a(k(n))}{k(n)}\sum_{\delta(n)/k(n)}\frac{\mu(\delta(n))}{\delta(n)}\sum_{\substack{d(n)\leq H^\frac{1}{k}\\(d(n),\ k(n))=1}}\frac{\mu(d(n))}{d(n)}\right)\nonumber\\
     =&c_2x\log x+O\left(x^{1+\epsilon}H^{\frac{1}{k}-1}\right).\label{Eq5.4}
\end{align}
From \eqref{Eq5.1}, \eqref{Eq5.2}, \eqref{Eq5.3} and \eqref{Eq5.4}, we have
\begin{equation*}
    \sum_{n\leq x}a(n)\lambda_f^4(n+1)=c_2x\log x+O\left(x^{1+\epsilon}H^{\frac{1}{k}-1}\right)+O\left(x^{\frac{158}{181}+\epsilon}H^\frac{2}{k}\right).
\end{equation*}
Note that by theorem 1, we have $q=\delta(n)k(n)d^k(n)$, and $\delta(n)\leq H,\ k(n)\leq H$ and $d^k(n)\leq H$. So we have $q\leq H^3$, but from theorem 1, $q$ has to satisfy the condition $q\ll x^{\frac{23}{181}-10\epsilon}$. This forces us to choose $H$ to be the optimal possible value satisfying $q\leq H^3$ and $q\ll x^{\frac{23}{181}-\epsilon}$. Thus we choose $H=x^{\frac{23}{543}-100\epsilon}$ and we have
\begin{equation*}
    \sum_{n\leq x}a(n)\lambda_f^4(n+1)=c_2x\log x+O\left(x^{\frac{520k+23}{543k}+\epsilon}\right).
\end{equation*}
This proves theorem 3.
\end{proof}

\textbf{Acknowledgements:} The first author wishes to express his gratitude to the Funding Agency "Ministry of Human Resource Development (MHRD), Govt. of India" for the fellowship PMRF, ID:3701831 for it's financial support. The authors are thankful to Prof. Saurabh kumar singh for drawing their attention to the paper \cite{9}.

\end{document}